\numberwithin{equation}{section}
\newtheorem{Theorem}{Theorem}[section]
\newtheorem{Proposition}{Proposition}[section]
\newtheorem{Lemma}{Lemma}[section]
\newtheorem{Corollary}{Corollary}[section]
\newtheorem{Definition}{Definition}[section]
\newcommand{\bTheorem}[1]{
\begin{Theorem} \label{T#1} }
\newcommand{\eT}{\end{Theorem}}
\newcommand{\bProposition}[1]{
\begin{Proposition} \label{P#1}}
\newcommand{\eP}{\end{Proposition}}
\newcommand{\bLemma}[1]{
\begin{Lemma} \label{L#1} }
\newcommand{\eL}{\end{Lemma}}
\newcommand{\bCorollary}[1]{
\begin{Corollary} \label{C#1} }
\newcommand{\eC}{\end{Corollary}}
\newcommand{\bFormula}[1]{
\begin{equation} \label{#1}}
\newcommand{\eF}{\end{equation}}
\newcommand{\vc}[1]{{\bf #1}}
\newcommand{\Ov}[1]{\overline{#1}}
\newcommand{\vU}{\vc{U}}
\newcommand{\vu}{\vc{u}}
\newcommand{\vr}{\varrho}
\newcommand{\vre}{\vr_\ep}
\newcommand{\vte}{\vt_\ep}
\newcommand{\vue}{\vu_\ep}
\newcommand{\vt}{\vartheta}
\newcommand{\vn}{\vc{n}}
\newcommand{\vV}{\vc{V}}
\newcommand{\vrb}{\Ov{\vr}}
\newcommand{\vtb}{\Ov{\vt}}
\renewcommand{\H}{\vc{H}}
\newcommand{\Ht}{\vc{H}_t}
\newcommand{\HH}{\mathbb{H}}
\newcommand{\Hpt}{\H^{\perp}_t}
\newcommand{\vz}{\vc{z}}
\newcommand{\vze}{\vz_\ep}
\newcommand{\vh}{\vc{h}}
\newcommand{\va}{\vc{a}}
\newcommand{\vQ}{\vc{Q}}
\newcommand{\om}{\omega}
\newcommand{\Div}{{\rm div}_x}
\newcommand{\Grad}{\nabla_x}
\newcommand{\tn}[1]{\mbox {\F #1}}
\newcommand{\dx}{\,{\rm d} {x}}
\newcommand{\dy}{\,{\rm d} {y}}
\newcommand{\dS}{\,{\rm d} {S}}
\newcommand{\vph}{{\boldsymbol \varphi}}
\newcommand{\vq}{\mathbf{q}}
\newcommand{\dt}{\,{\rm d} t }
\newcommand{\dxdt}{\,{\rm d}x{\rm d}t}
\newcommand{\intint}[1]{\int_0^T \int_{\Omega_t} #1 \ \dxdt}
\newcommand{\vv}{\vc{v}}
\newcommand{\ep}{\varepsilon}
\font\F=msbm10 scaled 1000
\newcommand{\R}{\mbox{\F R}}
\newcommand{\Rtri}{\mathbb{R}^3}
\newcommand{\de}{\partial}
\providecommand{\abs}[1]{\left\lvert#1\right\rvert}
\newcommand{\sil}{\rightarrow}
\newcommand{\weak}{\rightharpoonup}
\newcommand{\weakstar}{\stackrel{\ast}{\rightharpoonup}} 
\newcommand{\del}{d_{\ep,l}}
\newcommand{\psel}{\Psi_{\ep,l}}
\newcommand{\felj}{f_{\ep,l}^1}
\newcommand{\feld}{\vc{f}_{\ep,l}^2}
\definecolor{grey}{rgb}{0.85,0.85,0.85}
\long\def\greybox#1{%
    \newbox\contentbox%
    \newbox\bkgdbox%
    \setbox\contentbox\hbox to \hsize{%
        \vtop{
            \kern\columnsep
            \hbox to \hsize{%
                \kern\columnsep%
                \advance\hsize by -2\columnsep%
                \setlength{\textwidth}{\hsize}%
                \vbox{
                    \parskip=\baselineskip
                    \parindent=0bp
                    #1
                }%
                \kern\columnsep%
            }%
            \kern\columnsep%
        }%
    }%
    \setbox\bkgdbox\vbox{
        \color{grey}
        \hrule width  \wd\contentbox %
               height \ht\contentbox %
               depth  \dp\contentbox
        \color{black}
    }%
    \wd\bkgdbox=0bp%
    \vbox{\hbox to \hsize{\box\bkgdbox\box\contentbox}}%
    \vskip\baselineskip%
}
\title{Low stratification of a heat-conducting fluid in time-dependent domain}
\author{
Ond{\v r}ej Kreml $^{1}$\thanks{The works of O.K., V.M. and   \v S.N. were supported by  project GA16-03230S and by RVO 67985840.  Part of work was done during  stay of O.K. at Imperial College London which  was supported by the grant Iuventus Plus 0871/IP3/2016/74.}
\and V\'aclav M\'acha $^{1}$\footnotemark[1]
\and {\v S}{\' a}rka Ne{\v c}asov{\' a} $^1$\footnotemark[1]
\and Aneta Wr\'oblewska-Kami\'nska $^2$\thanks{The work of A.W.-K. 
is partially supported by a Newton Fellowship of the
Royal Society and by the grant Iuventus Plus 0871/IP3/2016/74 of Ministry of Sciences and Higher
Education RP. Her stay at Institute of Mathematics of Academy of Sciences, Prague was supported by 7AMB16PL060.}
}
\begin{document}
\maketitle

\bigskip

\centerline{$^1$Institute of Mathematics of the Academy of Sciences of
the Czech Republic} \centerline{\v Zitn\' a 25, 115 67 Praha 1,
Czech Republic}
\bigskip

\centerline{$^2$  Institute of Mathematics, Polish Academy of Sciences}
\centerline{\'Sniadeckich 8, 00-656 Warszawa, Poland}

\medskip
\abstract{
We study the low Mach number limit of the full Navier-Stokes-Fourier system in the case of low stratification with ill-prepared initial data for the problem stated on moving domain with prescribed motion of the boundary. Similarly as in the case of a fixed domain we recover as a limit the Oberback-Boussinesq system, however we identify one additional term in the temperature equation of the limit system which is related to the motion of the domain and which is not present in the case of a fixed domain. One of the main ingredients in the proof are the properties of the Helmholtz decomposition on moving domains and the dependence of eigenvalues and eigenspaces of the Neumann Laplace operator on time.






}

\section{Introduction}\label{s:intro}

The mathematical theory of singular limits of systems of equations describing fluid motion goes back to the seminal work of Klainerman and Majda \cite{KM1}. The motivation for a study of such type of limits follows from the generality of corresponding equations. More precisely, in a case of the full Navier-Stokes-Fourier system the equations describe a spectrum of possible motions e.g. sound waves or models of gaseous stars in astrophysics. Such type of study allows us to eliminate unimportant or unwanted modes of motion, as a consequence of scaling and asymptotic analysis. The aim of the asymptotic analysis of various physical systems is to derive a simplified system of equations which can be solved numerically or analytically see  e.g. Zeytounian \cite{Z}.

The goal of the mathematical analysis of low Mach number limits is to fill up the gap between the compressible fluids and their "idealized" incompressible models. There are two ways how to introduce the Mach number into the system, from the physical point of view different but from the mathematical point of view completely equivalent. The first approach considers a varying equation of state as well as the transport coefficients see work of Ebin \cite{EB1},  Schochet \cite{SCH2}. 
The second way is to evaluate qualitatively the incompressibility using the dimensional analysis.  We rewrite our system in the dimensionless form by scaling each variable by its characteristic value, see Klein \cite{Kl}. 

The mathematical analysis of singular limits in the frame of strong solutions can be referred to works of Gallager \cite{Gallag}, Schochet \cite {SCH2}, Danchin \cite {Da}, Hoff \cite{Ho}. The seminal works of Lions \cite{LI4} and the extension by Feireisl et al. \cite{FNP} on the existence of global weak solutions in the barotropic case gave a new possibility of rigorous study of the singular limits in the frame of weak solutions see work of Desjardins and Grenier \cite{DesGre}, Desjardinds, Grenier, Lions and Masmoudi \cite{DGLM}. 

The mathematical theory of the full Navier-Stokes-Fourier system was studied by Feireisl. First he developed a concept of variational solution under the assumption that the pressure can be decomposed into the elastic part and the thermal part. He introduced a definition of the weak solution using the thermal energy inequality instead of the energy equation and  complementing the system with the global total energy inequality \cite{EF70,EF71}. Later, he developed a new concept of a variational solution to the compressible Navier-Stokes-Fourier
system based on the principles of  global energy conservation and
 nonnegative local entropy production. This concept in particular allowed to develop important new results concerning the singular limits of variational solutions to the full system, see Feireisl, Novotn\' y \cite{FeNo6}.  Let us also mention that for the full system a low Mach number convergence on a short time interval within the framework of regular solutions was proved by Alazard \cite{A}.

In real world applications there are many problems where the fluid interacts with a boundary of its container which is not fixed and moves either by a prescribed motion or the motion of the boundary is related to the motion of the fluid. The mathematical theory of such motions then becomes even more complex and additional difficulties arise. In this paper we study the first, and arguably the easier case, namely a problem in a moving domain whose motion is prescribed by a given velocity field $\vV(t,x)$.

In the barotropic case, the existence theory of global weak solution was proved by Feireisl et al. \cite{FeNeSt,FeKrNeNeSt} for the Dirichlet and Navier type of boundary conditions, respectively. Moreover, in the framework of weak solutions the singular limit (low Mach number limit) in the case of moving domain was  investigated by Feireisl et al. in \cite{FKNNS14,FeKrMaNe}.

Concerning the full Navier-Stokes-Fourier system, the global existence of weak/variational solutions was extended to the case of moving domain by Kreml et al. see \cite{KMNW2,KMNW1}.

The aim of this paper is to fill up the gap of theory of singular limits by examining the low Mach number limit for the full Navier-Stokes-Fourier system on moving domains. We consider a low stratification with ill-prepared initial data. For a fixed domain the target system is the Oberback-Boussinesq system and the convergence of sequence of variational solutions to the primitive system to the weak solution of the target system was proved by Feireisl and Novotn\'y \cite{FeNo6}. Since the domain in our case is moving we can no longer assume that the potential of the driving force $F(x)$ satisfies $\int_{\Omega_t} F(x) \dx =0$. This is the reason why in the limit we recover the Oberback-Boussinesq system with a new additional term. 

The paper is structured as follows. In Section \ref{s:intro}, the variational formulation of the {\it primitive system} (scalled system) is introduced and the existence theorem is stated. 
Section \ref{s:target} is devoted to the {\it target system} - limit system where the Oberback-Boussinesq system is recovered as a low Mach number limit of the full system which in particular differs from case of a fixed domain. 
In Section \ref{s:lowmach} we state the uniform estimates and perform the limits in the continuity equation, entropy balance and momentum equation where the limit of the convective term remains unspecified. 

Finally, Section \ref{s:convec} is devoted to the study of the limit of the convective term. Firstly, we mention that in comparison with fixed domain the Helmholtz decomposition depends on time. The main problem is a possible development of fast oscillations in the momenta $ \vre\vue$, $\ep \to 0$ with respect to time.  We show the compactness of the solenoidal part of velocity field similarly as in the case of fixed domain. To prove the convergence of the gradient part it is necessary to introduce the acoustic equations. This idea goes back to Schochet who  found that singular component of the gradient part of the momentum together with a certain function of $\vr_{\epsilon}, \vt_{\epsilon}$
satisfy a linear wave equation.
We present the reduction to finite number of modes and as in the barotropic case on moving domain we deal with the fact that the eigenvalues and eigenfunctions of the Neumann Laplace equation depend on time.


\subsection{Primitive system}
Let us consider the full system on time dependent domain in low Mach number regime which is given by the following
\bFormula{i1a}
\partial_t \vr + \Div (\vr \vu) = 0,
\eF
\bFormula{i1b}
\partial_t (\vr \vu) + \Div (\vr \vu \otimes \vu) + \frac{1}{\ep^2}\Grad p = \Div \tn{S} + \frac{1}{\ep}\vr \nabla_x F,
\eF
\bFormula{i1c}
\partial_t (\vr s) + \Div (\vr s \vu) + \Div \left(\frac{\vq}{\vt}\right) = \sigma_\ep,
\eF
\bFormula{i1d}
\frac{{\rm d}}{{\rm d}t} \int \left(\frac{\ep^2}{2} \vr |\vu|^2 + \vr e -\ep\vr F \right)\, \dx = 0.
\eF
The entropy production measure $\sigma_\ep$ satisfies 
	\bFormula{sigma1}
	\sigma_\ep \geq \frac{1}{\vt} \left( \ep^2 \tn{S} : \Grad \vu  -\frac{\vq}{\vt} \cdot \Grad \vt \right).
	\eF
 In particular the number $\ep >  0$ is related to the choice of Mach number ($=\frac{\vu_{char}}{\sqrt{p_{char}/\varrho_{char}}}$) to be sufficiently small (the speed of sound dominates characteristic fluid velocity) and a Froude number to be equal $\sqrt{\ep}$ which is related to the low stratification.   
We consider this system of equations 
 being mathematical formulations of the balance of mass, linear momentum, entropy and total energy respectively and to be satisfied on the space-time cylinder $Q_T =  \cup_{t\in(0,T)} \{t\}  \times \Omega_t $ describing a physical domain moving in time. 
Unknowns are the density $\vr:Q_T\mapsto [0,\infty)$, the velocity $\vu:Q_T\mapsto \mathbb R^3$ and the temperature $\vt:Q_T\mapsto [0,\infty)$. The potential of the external body force $F = F(x)$ is assumed to be independent of time. Other quantities appearing in these equations are functions of the unknowns, namely the stress tensor $\tn{S}$, the internal energy $e$, the pressure $p$, the entropy $s$, and the entropy production rate $\sigma_\ep$.

To be more precise, the time dependent domain $\Omega_t$ is prescribed by movement of its boundary on the time interval $[0,T]$. In order to describe this movement we consider a given velocity field $\vc{V}(t,x)$ for $t \geq 0$ and $x \in \mathbb{R}^3$ which is smooth enough. Then the position of the domain $\Omega_t$ at time $t > 0$ is given by the solution to the associated system of equations 
\begin{equation}
\frac{{\rm d}}{{\rm d}t} \vc{X}(t, x) = \vc{V} \Big( t, \vc{X}(t, x) \Big),\ t > 0,\ \vc{X}(0, x) = x\label{X.t},
\end{equation}
and by a given bounded initial domain $\Omega_0 \subset \mathbb{R}^3$ as
\begin{equation}
\Omega_\tau = \vc{X} \left( \tau, \Omega_0 \right),
\quad  \mbox{ with } \quad
\Gamma_\tau = \partial \Omega_\tau.
\end{equation}

The system of equations \eqref{i1a}-\eqref{i1d} is complemented by the following boundary conditions. We assume that the boundary of the domain is impermeable, hence
\bFormula{i6} (\vu - \vc{V})
\cdot \vc{n} |_{\Gamma_\tau} = 0 \ \mbox{for any}\ \tau \geq 0,
\eF
where $\vc{n}(t,x)$ denotes the unit outer normal vector to $\Gamma_t$. We prescribe full slip boundary condition for the velocity field $\vu$, meaning 
\bFormula{b1} \left[ \tn{S} \vc{n} \right]\times \vc{n} |_{\Gamma_\tau} = 0
\eF
and for the heat flux -- the conservative boundary condition
\bFormula{b2} \vq\cdot \vc{n} |_{\Gamma_\tau}=0 .
\eF
Additionally, we assume that the moving domain does not change its volume ($|\Omega_\tau| = |\Omega_0|$ for any $\tau \geq 0$). Namely, it is possible to choose $\vV$ such that
\begin{equation}\label{divV}
\Div \vc{V} (\tau, \cdot) = 0 \mbox{ for any }\tau \geq 0. 
\end{equation}

Finally, the system \eqref{i1a}-\eqref{i1d} is supplemented with initial conditions 
$
\varrho_{0},$ $\vc{u}_{0},$ $\vt_{0}$
and we denote
$e_0:= e(\vr_0,\vt_0)$ and $s_0 := s(\vr_0,\vt_0).$
In particular, we assume that the initial data are ill-prepared and take the form
\begin{equation}\label{initial_1}
\varrho_{0,\varepsilon} = \overline \varrho + \varepsilon\varrho^{(1)}_{0,\varepsilon},\quad 
\vartheta_{0,\varepsilon} = \overline \vartheta + \varepsilon \vartheta^{(1)}_{0,\varepsilon}, \quad \mbox{ where }\overline\varrho>0,\ \overline\vartheta >0 \mbox{ are positive constants, }
\end{equation}
\begin{equation}\label{initial_2}
\int_{\Omega_0} \varrho^{(1)}_{0,\varepsilon}\dx = 0, \quad \int_{\Omega_0} \vartheta^{(1)}_{0,\varepsilon}\dx = 0 \quad \mbox{ for all }\ep>0
\end{equation}
and 
\begin{equation}\label{initial_3}
\varrho^{(1)}_{0,\varepsilon}, \ \vu_{0,\ep}, \ \vartheta^{(1)}_{0,\varepsilon} \mbox{ are bounded measurable functions for all }\ep>0.
\end{equation}

\subsection{Hypotheses}
\label{s:m1}

Motivated by \cite{KMNW2,FeNo6} we introduce the following set of assumptions, which allow to obtain the existence of weak solutions.

The stress tensor $\tn{S}$ is determined by the standard Newton rheological law
\bFormula{i4}
\tn{S} (\vt, \Grad \vu) = \mu(\vt) \left( \Grad \vu +
\Grad^t \vu - \frac{2}{3} \Div \vu \tn{I} \right) + \eta (\vt) \Div \vu
\tn{I},\,\, \mu > 0,\ \eta \geq 0.
\eF
We assume the viscosity coefficients $\mu$ and $\eta$ are continuously differentiable functions of the absolute temperature, namely $\mu,\ \eta \in C^1[0,\infty)$ and satisfy
	\bFormula{mu_1}
	0< \underline{\mu} (1+ \vt) \leq \mu(\vt) \leq \overline{\mu} (1+\vt),
	\quad
	\sup_{\vt \in [0,\infty)} |\mu'(\vt)| \leq \overline{m},
	\eF
	\bFormula{eta_1}
	0 \leq \eta(\vt) \leq \overline{\eta} (1+\vt).
	\eF
The heat flux $\vq$ satisfies the Fourier law for in the  following form
 \bFormula {q1}
 \vq= -\kappa (\vt)\Grad\vt,
\eF
where the heat coefficient $\kappa$ can be decomposed into two parts
	\bFormula{kappa_1}
	\kappa(\vt) = \kappa_M (\vt) + \kappa_R (\vt), \quad \mbox{where }\kappa_M, \ \kappa_R \in C^1[0,\infty) ,
	\eF
	\bFormula{kappa_2}
	0< \underline{\kappa_R} (1+ \vt^3) \leq \kappa_R (\vt) \leq \overline{\kappa_R}(1+ \vt^3),
	\eF
	\bFormula{kappa_3}
	0< \underline{\kappa_M}   (1+ \vt) \leq \kappa_M (\vt) \leq \overline{\kappa_M}  (1+ \vt).
	\eF
In the above formulas $\underline{\mu}$, $\overline{\mu}$, $\overline{m}$, $\overline{\eta}$, $\underline{\kappa_R}$, $\overline{\kappa_R}$, $\underline{\kappa_M} $, $\overline{\kappa_M}$ are positive  constants.

The quantities $p$, $e$, and $s$ are continuously differentiable functions for positive values of $\vr$, $\vt$ and satisfy Gibbs' equation 
	\bFormula{gibs}
	\vt D s(\vr,\vt) = D e(\vr,\vt) + p(\vr,\vt) D\left( \frac{1}{\vr}  \right) \mbox{ for all } \vr, \ \vt > 0.
	\eF

Further, we assume the following state equation for the pressure and the internal energy
\bFormula{p1p}
p(\vr,\vt) = p_M(\vr,\vt)+ p_R(\vt), \quad p_R(\vt) = \frac a3 \vt^4,\ a>0,
\eF
\bFormula{e1e}
e(\vr,\vt) = e_M(\vr,\vt) + e_R(\vr,\vt), \quad \vr e_R(\vr,\vt) = a\vt^4,
\eF
and
\bFormula{s1s}
s(\vr,\vt) = s_M(\vr,\vt) + s_R(\vr,\vt), \quad \vr s_R(\vr,\vt) = \frac 43 a\vt^3.
\eF
According to the hypothesis of thermodynamic  stability the molecular components satisfy
	\bFormula{pm_1}
	\frac{\partial p_M}{\partial \vr} >0 \mbox{ for all } \vr, \ \vt >0
	\quad
\mbox{ and } 
	\quad
	0<\frac{\partial e_M}{\partial \vt} \leq c \mbox{ for all }  \vr, \ \vt >0.
	\eF
Moreover,
	\bFormula{em2}
	\lim_{\vt\to 0^+} e_M(\vr,\vt) = \underline{e}_M(\vr) > 0 \mbox{ for any fixed } \vr >0,
	\eF	
and 
	\bFormula{em3}
	\left| \vr \frac{\partial e_M(\vr,\vt)}{\partial \vr} \right| \leq c e_M(\vr, \vt) \mbox{ for all } \vr, \ \vt >0.
	\eF
We suppose that there is a function $P$ satisfying 
	\bFormula{p2p}
	P \in C^1[0,\infty), \ P(0)=0,\  P'(0)>0,
	\eF
and two positive constants $0< \underline{Z} < \overline{Z}$ such that
	\bFormula{pm2}
	p_M(\vr,\vt) =  \vt^{\frac{5}{2}} P\left( \frac{\vr}{\vt^{\frac{3}{2}}} \right) 
	\mbox{ whenever } 0< \vr \leq \underline{Z} \vt^{\frac{3}{2}},\mbox{ or, } \vr > \overline{Z} \vt^{\frac{3}{2}} 
	\eF
and 
	\bFormula{pm3}
	p_M(\vr,\vt) = \frac{2}{3} \vr e_M (\vr,\vt) \mbox{ for } \vr > \overline{Z}\vt^{\frac{3}{2}}.
	\eF


\subsection{Variational formulation of the primitive system}

We work with a variational formulation of the primitive system \eqref{i1a}-\eqref{i1d}. Namely, the equation \eqref{i1a} is fulfilled in the sense of renormalized solutions introduced by DiPerna and Lions \cite{DL}:
\bFormula{m2}
\int_0^T \int_{\Omega_t} \vr B(\vr) ( \partial_t \varphi + \vu \cdot \Grad \varphi )\, \dxdt = 
\int_0^T \int_{\Omega_t} b(\vr)  \Div \vu \varphi\, \dxdt - \int_{\Omega_0}  \vr_0 B(\vr_0)  \varphi (0)\, \dx
\eF
 for any $\varphi \in C^1_c([0,T) \times \mathbb{R}^3)$, and any  $b \in L^\infty \cap C [0, \infty)$ such that { $b(0)=0$} and 
 $B(\vr) = B(1) + \int_1^\vr \frac{b(z)}{z^2} {\rm d}z $
where we have $\vr \geq 0$ a.e. in $(0,T) \times \mathbb{R}^3$.

The momentum equation \eqref{i1b} is transferred to the following integral identity
\begin{align} \nonumber
&\int_0^T \int_{\Omega_t} \left( \vr \vu \cdot \partial_t \vph + \vr [\vu \otimes \vu] : \Grad \vph + \frac{1}{\ep^2}p(\vr,\vt) \Div \vph
- \tn{S} (\vt, \Grad \vu) : \Grad \vph + \frac{1}{\ep}\vr \nabla_x F\cdot \vph \right) \dxdt \\
& \qquad = - \int_{\Omega_0} \vr_0 \vu_0 \cdot \vph (0, \cdot) \ \dx,\label{m3}
\end{align}
for any test function $\vph \in C^1_c (\overline{Q_T} ; \mathbb{R}^3)$ 
such that $\vph(T,\cdot) =0$ and 
$
\vph \cdot \vc{n}|_{\Gamma_\tau} = 0$ for any $\tau \in [0,T].$
Moreover,
\bFormula{m50}
\vu,\nabla_x\vu \in L^2(Q_T; \mathbb{R}^3) \ \mbox{and}\ (\vu - \vc{V}) \cdot \vc{n}  (\tau , \cdot)|_{\Gamma_\tau}  = 0 \ \mbox{for a.a.}\ \tau \in [0,T],
\eF

The entropy balance \eqref{i1c} is rewritten in the form of equation
	\begin{align} \nonumber
	& \int_0^T \int_{\Omega_t} \vr s (\partial_t \varphi + \vu \cdot \Grad \varphi ) \dxdt 
	- \int_0^T \int_{\Omega_t} \frac{\kappa(\vt) \Grad \vt \cdot \Grad \varphi }{\vt} \dxdt \\
	& 
     \quad + \langle \sigma_\ep;\varphi \rangle =
    - \int_{\Omega_0} \vr_0 s_0 \varphi (0) \dx\label{m5}
	\end{align}
for all $\varphi \in C^1 (\overline{Q_T})$ such that $\varphi(T,\cdot) =0$
.

Finally, the energy inequality has to cover the movement of the domain, hence we get
\begin{align} \nonumber 
&\int_{\Omega_\tau}\left(\frac{\ep^2}{2} \vr |\vu|^2 + \vr e - \ep\vr F\right)(\tau,\cdot) \dx\leq \int_{\Omega_0} \left(\frac{\ep^2}{2} (\vr_0 \vu_0^2) + \vr_0 e_0 - \ep\vr_0 F - \ep^2 \vr_0 \vu_0\cdot \vc{V}(0)\right)\dx\\
&-\ep^2 \int_0^\tau\int_{\Omega_t}\left( \vr(\vu\otimes\vu):\Grad \vc{V} 
-\mathbb S:\Grad \vc{V} + \vr \vu\cdot \partial_t \vc{V}
 + \frac{1}{\varepsilon} \varrho \nabla_x F \cdot \vc{V}
\right) \dxdt  + \ep^2 \int_{\Omega_\tau}\vr \vu \cdot \vc{V}(\tau,\cdot)\dx\label{m6}
\end{align}
for a.a. $\tau\in(0,T)$. Notice that the term containing $\Div \vV$ vanishes due to assumption \eqref{divV} and \eqref{m6} is an inequality, what differs this formulation from the one in \cite{FeNo6} (see \cite{KMNW2}).

Let us remark that when writing $f \in L^\infty(0,T;L^q(\Omega_t))$ for some $q \in[1,\infty)$ we mean that the mapping $t\to \|f(t,\cdot)\|_{L^q(\Omega_t)}$ is measurable and bounded function on time interval $[0,T]$.

We have
\begin{Theorem}\label{t:existence}
Let $\Omega_0 \subset \mathbb{R}^3$ be a bounded domain of class $C^{2 + \nu}$ with some $\nu >0$, and let $\vc{V} \in C^1([0,T]; C^{3}_c (\mathbb{R}^3;\mathbb{R}^3))$ be given. Assume that hypothesis \eqref{i4}--\eqref{pm3} are satisfied, let $F \in W^{1,\infty}(\R^3)$ and let $\ep > 0$ but sufficiently small s.t. $\varrho_{0,\ep}\geq 0$, $\vartheta_{0,\ep}>0$.

Then the problem \eqref{i1a}--\eqref{i1d} with boundary conditions \eqref{i6}, \eqref{b1}, \eqref{b2} and initial conditions \eqref{initial_1}--\eqref{initial_3}, where the entropy production rate $\sigma_\ep$ satisfies \eqref{sigma1}, admits a variational solution on any finite time interval $(0,T)$. Namely,  the trio $(\vr_\ep,\vu_\ep,\vt_\ep)$ satisfies \eqref{m2}--\eqref{m6}.
Moreover 
\begin{itemize}
\item $\vr_\ep \in L^\infty(0,T; L^{\frac 53}(\Omega_t))$, $\vr_\ep \geq 0$, 
	$\vr_\ep \in L^q(Q_T)$ for certain $q>\frac{5}{3}$, 
\item $\vu_\ep,\, \nabla_x \vu_\ep \in L^2(Q_T)$, $\vr_\ep \vu_\ep \in L^\infty(0,T;L^1(\Omega_t))$,
\item  $\vt_\ep>0$ a.a. on $Q_T$, 
$\vt_\ep \in L^\infty(0,T;L^4 (\Omega_t))$, $\vt_\ep,\,\nabla_x \vt_\ep \in L^2(Q_T),$ and  $\log\vt_\ep,\,\nabla_x \log\vt_\ep \in L^2(Q_T),$ 
\item $\vr_\ep s(\vr_\ep,\vt_\ep),\ \vr_\ep s(\vr_\ep,\vt_\ep) \vu_\ep, \ \frac{\vq(\vt_\ep)}{\vt_\ep} \in L^1(Q_T)$,
\end{itemize}
\end{Theorem}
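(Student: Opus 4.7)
Since Theorem~\ref{t:existence} is, for fixed $\ep>0$, essentially an $\ep$-rescaled version of the existence result for the full Navier--Stokes--Fourier system on a moving domain established in \cite{KMNW2}, my plan is to verify that the $\ep$-dependent rescaling and the low-stratification driving force preserve the structural hypotheses of that theorem, invoke the construction of \cite{KMNW2}, and then extract the regularity properties from the uniform a~priori bounds inherent in that construction. For fixed $\ep>0$ the system \eqref{i1a}--\eqref{i1d} is the standard full system on $Q_T$ with pressure $\ep^{-2}p(\vr,\vt)$ and body force $\ep^{-1}\vr\nabla_x F$; since $F\in W^{1,\infty}(\R^3)$ the force is admissible, and the hypotheses \eqref{p1p}--\eqref{pm3} pass to the rescaled pressure up to multiplicative constants. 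The ill-prepared data \eqref{initial_1}--\eqref{initial_3} guarantee $\vr_{0,\ep}\geq 0$ and $\vt_{0,\ep}>0$ for sufficiently small $\ep$ and give bounded initial total energy, so the construction yields a solution on the whole interval $(0,T)$ rather than only locally.

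For the construction itself I would follow the three-level approximation scheme of \cite{KMNW2}: embed the moving domain in a large fixed ball $B\supset\bigcup_{t\in[0,T]}\Ov{\Omega_t}$ and prescribe on $B\setminus\Omega_t$ a singular friction forcing $\vu$ toward $\vV$; regularize the continuity equation by an artificial viscosity $\delta\Delta\vr$ and add an artificial pressure $\delta\vr^\beta$ for large $\beta$; and solve the resulting system on $B$ by Galerkin approximation in the velocity coupled to a parabolic solver for the continuity and entropy equations. The three limit passages, Galerkin $\to$ $(\delta,\beta)\to 0$ $\to$ $\omega\to 0$, are then performed using the uniform energy and entropy bounds provided by \eqref{m6} and \eqref{sigma1}, recovering at each level the variational formulations \eqref{m2}--\eqref{m6}.

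The hard part, and the place where \cite{KMNW2} genuinely extends the fixed-domain theory of \cite{FeNo6}, is the strong convergence of the density on a moving domain: Feireisl's effective viscous flux identity must be proved with test functions compatible with the impermeability condition \eqref{i6}, and the renormalized continuity equation \eqref{m2} must be propagated through the penalization limit. A second delicate point is the treatment of the entropy production $\sigma_\ep$, which is only a nonnegative measure satisfying \eqref{sigma1}; accordingly \eqref{m5} and \eqref{m6} are retained as inequalities, the energy defect being balanced by the extra boundary-flux terms involving $\vV$ on the right-hand side of \eqref{m6}. The regularity claims at the end of the statement then follow directly from the uniform estimates: $\vr\in L^\infty_tL^{5/3}_x$ from the molecular pressure together with \eqref{pm2}--\eqref{pm3}; $\vt\in L^\infty_tL^4_x$ from the radiation contribution $a\vt^4/3$ in \eqref{e1e}; and the $L^2(Q_T)$ bounds on $\nabla_x\vu$, $\nabla_x\vt$, $\nabla_x\log\vt$ from the dissipation $\tn{S}\!:\!\Grad\vu/\vt+\kappa|\Grad\vt|^2/\vt^2$ appearing in \eqref{sigma1}, while the integrability of $\vr_\ep s(\vr_\ep,\vt_\ep)$ and of $\vq(\vt_\ep)/\vt_\ep$ follows from \eqref{s1s} and \eqref{q1} combined with the previous bounds.
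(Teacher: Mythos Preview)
Your proposal is correct and follows essentially the same route as the paper: both reduce the theorem to the existence result of \cite{KMNW2} for the full system on a moving domain, observing that the $\ep$-scaling and the driving force $F$ are lower-order perturbations that do not affect the construction. The paper's proof is in fact terser than yours, adding only one specific observation you could make explicit: in the penalization scheme, extending $\vr_0$ by zero on $B\setminus\Omega_0$ ensures that both the forcing term $\vr\nabla_x F$ and the potential energy $\vr F$ vanish identically on the ``solid'' part of the artificial domain, so they generate no spurious contributions in the penalization limit.
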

\begin{proof}
This theorem for $\ep = 1$ and $F = 0$ has been recently proved in \cite{KMNW2}. In order to accommodate nonzero forcing $F \neq 0$ we do not need any additional technique in the proof, we just handle lower order terms $\vr \nabla_x F$ in the momentum equation and $\vr F$ in the energy inequality. In particular one easily observes that using the penalization technique as in \cite{KMNW2}, both these terms become equal to zero on the "solid" part of the artificial domain $B$ when we choose the initial density $\vr_0$ to be equal to zero on the solid part. The scaling by $\ep$ does not represent any additional difficulties in the proof of existence of variational solutions.
\end{proof}

\section{Target system and main result}\label{s:target}
First, we introduce 
\begin{equation}\label{eq:Gdef}
G(t) := \fint_{\Omega_t} \vV(t,x) \cdot \nabla_x F(x) \dx = \frac{1}{|\Omega_t|} \int_{\Omega_t} \vV(t,x) \cdot \nabla_x F(x) \dx.
\end{equation}
We claim that the following version of the Oberback-Boussinesq system is recovered as the low Mach number limit
\bFormula{OB_system1}
\Div \vU = 0
\eF
\bFormula{OB_system2}
\overline \vr (\partial_t \vU + \Div (\vU\otimes \vU)) + \nabla_x \Pi - \mu(\overline \vartheta) \Delta_x \vU = r \nabla_x F
\eF
\bFormula{OB_system3}
\overline \varrho\, \overline{c_p} \left(\partial_t \Theta + \Div (\Theta \vU)\right) - \kappa(\vtb)\Delta_x\Theta - \overline \alpha\, \vrb\,\vtb \vU\cdot\nabla_x F   = -\overline \alpha\,\vrb\,\vtb G
\eF
\bFormula{OB_system4}
r + \overline \varrho\, \overline \alpha \Theta  = 0.
\eF
This system is considered on a time dependent domain $Q_T$ and supplemented with boundary conditions 
 \begin{equation}\label{bc_OB_1}
 (\vU(\tau, \cdot) - \vV(\tau,\cdot))\cdot\vn|_{\Gamma_\tau} = 0, \quad
 ((\nabla_x \vU+\nabla_x^t \vU)\vc{n}) \times \vc{n} |_{\Gamma_t} = 0, \quad \nabla_x\Theta\cdot\vn|_{\Gamma_t}=0
 \end{equation}
and initial data 
	\begin{equation}\label{in_OB_0}
    \vU(0,\cdot) = \vU_0, \quad \Theta_0 = \Theta(0,\cdot) \quad \mbox{ in } \Omega_0
    \end{equation}
Here $\overline \alpha = \frac{1}{\vrb} \frac{\partial_\vartheta p}{\partial_\varrho p}(\vrb,\vtb)$ and $\overline{c_p} = \de_\vartheta e(\vrb, \vtb) + \overline{\alpha} \frac{\vtb}{\vrb} \de_\vartheta p(\vrb,\vtb)$.
   
Note that the difference with respect to classical Oberback-Boussinesq system is the presence of additional forcing term in the equation for temperature variation $\Theta$. As we will see later, the presence of this term in the system is related to the fact that unlike in the case of a fixed domain, it is no longer possible to assume
\begin{equation*}
\int_{\Omega_t} F(x) \dx = 0
\end{equation*}
for all $t \in [0,T)$. It is however interesting to notice, that $G \equiv 0$ whenever the motion of the physical domain is perpendicular to the gradient of the potential $F$ and in that case one ends up with a usual Oberback-Boussinesq system.

We also note that $G(t)$ can be written as 
\begin{equation*}
G(t) = \fint_{\Omega_t} \Div\left(\vV(t,x) F(x)\right) \dx = \frac{1}{|\Omega_t|} \int_{\Gamma_t} F\vV\cdot\vn \dx
\end{equation*}
and combined with the boundary condition \eqref{bc_OB_1} we have
\begin{equation*}
G(t) = \fint_{\Omega_t} \Div\left(\vU(t,x) F(x)\right) \dx = \fint_{\Omega_t} \vU(t,x)\cdot\nabla_x F(x) \dx, 
\end{equation*}
yielding that \eqref{OB_system3} can be written as
\begin{equation*}
\overline \varrho\, \overline{c_p} \left(\partial_t \Theta + \Div (\Theta \vU)\right) - \kappa(\vtb)\Delta_x\Theta  = \overline \alpha\, \vrb\,\vtb\left( \Div(F\vU) - \fint \Div(F\vU)\right).
\end{equation*}

\bigskip
\begin{Definition}\label{def:OB_sol}
We say that $\vU$, $\Theta$ is a weak solution to the Oberback-Boussinesq system 
\eqref{OB_system1}--\eqref{OB_system4}
if the following holds:
\begin{itemize}
\item $\Div \vU (\tau, \cdot )=0 $ and $(\vU(\tau, \cdot) - \vV(\tau,\cdot))\cdot \vn|_{\Gamma_\tau} =0$ for a.a. $\tau \in (0,T)$,
\item The equation
\begin{equation*}
\begin{split}
\int_0^T \int_{\Omega_t} & \overline \varrho \vU \cdot \partial_t \vph + \overline{\varrho} (\vU \otimes \vU):\Grad \vph - \mu(\overline \vartheta)\left(\nabla_x\vU + \nabla_x^t \vU\right)  : \Grad \vph 
\\ & = - \int_0^T \int_{\Omega_t} r \nabla_x F \cdot \vph \dxdt 
- \int_{\Omega_0} \vU_0 \cdot \vph(0,\cdot) \dx
\end{split}
\end{equation*}
holds for all $\vph\in C^{\infty}_c(\overline{Q_T})$ such that $\vph(T,\cdot) = 0$,  $\Div \vph = 0$ and $\vph \cdot \vc{n}|_{\Gamma_t} = 0$,
\item 
equation \eqref{OB_system3} is satisfied a.a. in $Q_T$ and $\nabla_x \Theta \cdot \vn|_{\Gamma_t} = 0$ in a sense of traces for a.a. $\tau \in (0,T)$,
\item Boussinesq relation \eqref{OB_system4} holds,
\item $\vU,\ \nabla_x \vU \in L^2(Q_T)$, ${\rm ess\,}\sup_{t\in (0,T)} \|\vU(t,\cdot)\|_{L^2(\Omega_t)} < c$,
\item the mapping $t \mapsto \| \Theta(t,\cdot)\|_{L^q(\Omega_t)}$ belongs to $W^{1,q}_{loc}((0,T]) \cap C([0,T])$  and  the mapping $t \mapsto \| \Theta(t,\cdot)\|_{W^{2,q}(\Omega_t)}$ belongs to $L^{q}_{loc}((0,T])$ for certain $q>1$.
\end{itemize}
\end{Definition}

\begin{Theorem}\label{t:main}
Let $\Omega_0 \subset \mathbb{R}^3$ be a bounded domain of class $C^{2 + \nu}$ with some $\nu >0$, and let $\vc{V} \in C^1([0,T]; C^{3}_c (\mathbb{R}^3;\mathbb{R}^3))$ be given and satisfy \eqref{divV}. Assume that hypothesis \eqref{i4}--\eqref{pm3} are satisfied, let $F \in W^{1,\infty}(\R^3)$ and let $\ep > 0$ but sufficiently small s.t. $\varrho_{0,\ep}\geq 0$, $\vartheta_{0,\ep}>0$.

Let the trio $(\vr_\ep,\vu_\ep,\vt_\ep)$ be a variational solution to  the problem \eqref{i1a}--\eqref{i1d} with boundary conditions \eqref{i6}, \eqref{b1}, \eqref{b2} and initial conditions \eqref{initial_1}--\eqref{initial_3}  on any finite time interval $(0,T)$, and where entropy production rate satisfies \eqref{sigma1}.

Then 
	\begin{equation}
    {\rm ess}\sup_{t\in (0,T)} \| \varrho_\ep - \overline\varrho\|_{L^q(\Omega_t)} \to 0 \quad \mbox{ as } \ep \to 0 \mbox{ for certain }q>1,
    \end{equation}
and, for a suitable subsequence, 	
    \begin{equation}
	\vu_\ep \weak \vU \quad \mbox{ weakly in } L^2(Q_T) 
    \mbox{ for } \ep \to 0,
    \end{equation}
     \begin{equation}
	\nabla_x\vu_\ep \weak \nabla_x\vU \quad \mbox{ weakly in } L^2(Q_T) 
    \mbox{ for } \ep \to 0,
    \end{equation}
      \begin{equation}
	\frac{\vartheta_\ep - \overline{\vartheta}}{\ep} \weak \Theta \quad \mbox{ weakly in } L^q(Q_T) 
    \mbox{ for } \ep \to 0 \mbox{ with certain }q>1,
    \end{equation}
 where the couple $\vU$ and $\Theta$ is a weak solution  according to Definition~\ref{def:OB_sol} to the Oberbeck-Boussinesq system \eqref{OB_system1}--\eqref{OB_system4} with boundary condition \eqref{bc_OB_1} and initial data \eqref{in_OB_0} 
 \begin{equation}\label{in_OB_1}
 \vU_0 = \vc{H}_0 [\vu_0], \quad
 \Theta_0 = \frac{\overline{\vartheta}}{\overline c_p}\left(\partial_\varrho s(\vrb,\vtb) \varrho^{(1)}_0 + \partial_\vartheta s(\vrb,\vtb) \vartheta^{(1)}_0 + \overline \alpha F\right) \quad
 \mbox{ in } \Omega_0,
     \end{equation}
 where $\vu_{0,\ep} \weakstar \vu_0$,  $\vr^{(1)}_{0,\ep} \weakstar \vr^{(1)}_0$, $\vt^{(1)}_{0,\ep} \weakstar \vt^{(1)}_0$	weakly$^*$ in $L^\infty(\Omega_0)$ for $\ep \to 0$.
\end{Theorem}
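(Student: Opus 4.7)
The plan is to follow the Feireisl--Novotn\'y framework \cite{FeNo6} for the fixed-domain case, adapting it to moving domains along the lines of the barotropic analysis \cite{FKNNS14,FeKrMaNe}. I would proceed in three stages. First, I combine the total energy inequality \eqref{m6} with the entropy balance \eqref{m5} to build a relative energy functional around the constant background state $(\vrb,\vtb)$, which is admissible thanks to the volume-preserving hypothesis \eqref{divV}. The coercivity of the thermodynamic potential near $(\vrb,\vtb)$ together with the dissipation yields $O(1)$ bounds on $(\vre-\vrb)/\ep$ and $(\vte-\vtb)/\ep$ in essential norms and on $\vue$, $\nabla_x \vue$ in $L^2(Q_T)$; the forcing $\ep\vre F$ is genuinely lower order and hypothesis \eqref{initial_3} controls the bounds at $t=0$. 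These estimates deliver the weak convergences claimed in the statement.

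Second, passing to the limit in the renormalized continuity equation \eqref{m2}, using $\vre\to\vrb$ strongly and $\vue\weak\vU$ weakly with $\vrb$ constant, yields $\Div\vU=0$; combined with \eqref{i6} this produces $(\vU-\vV)\cdot\vn|_{\Gamma_t}=0$. For the entropy I use Gibbs' relation \eqref{gibs} to expand
\begin{equation*}
s(\vre,\vte)-s(\vrb,\vtb)=\ep\bigl(\partial_\vr s(\vrb,\vtb)\,\vr^{(1)}_\ep+\partial_\vt s(\vrb,\vtb)\,\vt^{(1)}_\ep\bigr)+o(\ep),
\end{equation*}
divide \eqref{m5} by $\ep$ and pass to the limit to obtain the transport--diffusion equation for $\Theta$. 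The new term $-\overline\alpha\,\vrb\vtb\,G(t)$ in \eqref{OB_system3} appears precisely because, unlike in the fixed-domain case, the ``zero-mean'' normalization $\int_{\Omega_t}F\,\dx=0$ cannot be enforced for all $t$; tracking this time-varying mean through the derivation produces exactly the $G(t)$ correction, consistent with the equivalent form of \eqref{OB_system3} indicated after Definition~\ref{def:OB_sol}. The Boussinesq relation \eqref{OB_system4} is obtained by identifying the leading-order gradient in the momentum equation: at order $1/\ep$ the pressure fluctuation $\partial_\vr p(\vrb,\vtb)\vr^{(1)}+\partial_\vt p(\vrb,\vtb)\vt^{(1)}$ must be spatially constant, which together with the entropy expansion fixes $r=-\vrb\,\overline\alpha\,\Theta$.

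The main obstacle is the limit in the convective term $\Div(\vre\vue\otimes\vue)$. I decompose $\vue=\Ht[\vue]+\Hpt[\vue]$ by the time-dependent Helmholtz projection. Compactness of $\Ht[\vue]$ in $L^2$ follows from an Aubin--Lions argument by testing \eqref{m3} with solenoidal $\vph$ tangential to $\Gamma_\tau$, which eliminates the singular pressure gradient and produces $\partial_t\Ht[\vre\vue]$ bounded in a negative Sobolev norm uniformly in $\ep$. The gradient part $\Hpt[\vue]$ develops fast acoustic oscillations of frequency $1/\ep$; following Schochet I would derive, for a potential $\Phi_\ep$ of $\Hpt[\vre\vue]$ coupled with a suitable linear combination $Z_\ep$ of $\vr^{(1)}_\ep$ and $\vt^{(1)}_\ep$, a linear wave-type system with Neumann boundary conditions enforced by $(\vue-\vV)\cdot\vn=0$. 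The hard step is the spectral analysis on the moving domain: I project onto the first $m$ eigenfunctions of $-\Delta_N$ on $\Omega_t$, whose eigenvalues and eigenspaces now depend on $t$ through the flow $\vc{X}(t,\cdot)$. Using the regularity of $\vV$ and spectral perturbation theory for the smoothly varying family $\{\Omega_t\}$, one controls this dependence and shows that the $1/\ep$ oscillations average to zero when tested against smooth solenoidal functions, so only $\vrb\,\vU\otimes\vU$ survives and \eqref{OB_system2} follows. Finally, the initial data \eqref{in_OB_1} are identified from the Gibbs expansion of $s_0$ at $(\vrb,\vtb)$, including the $F$-term from the leading-order pressure balance, and from the observation that only the solenoidal component $\H_0[\vu_0]$ survives in the weak limit of $\vue(0,\cdot)$.
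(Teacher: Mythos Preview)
Your outline is correct and follows essentially the same route as the paper: relative energy built from \eqref{m5}--\eqref{m6}, limit passage in the continuity/entropy/momentum equations producing the $G(t)$ correction via the time-varying mean of $F$, and a Schochet-type acoustic analysis after a time-dependent Helmholtz split. Two technical points deserve more care than your sketch suggests. First, the entropy production $\sigma_\ep$ is only a nonnegative measure of size $O(\ep^2)$, and it enters the right-hand side of the acoustic system; to keep the modal coefficients continuous the paper introduces a \emph{time lifting} $\Sigma_\ep$ (a primitive of $\sigma_\ep$ along the flow of $\vV$) and absorbs $\frac{A}{\ep\zeta}\Sigma_\ep$ into the scalar unknown $Z_\ep$, which your description of $Z_\ep$ as ``a linear combination of $\vr^{(1)}_\ep$ and $\vt^{(1)}_\ep$'' omits. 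Second, your appeal to ``spectral perturbation theory for the smoothly varying family $\{\Omega_t\}$'' is too optimistic: eigenvalues are Lipschitz in $t$, but the individual eigenfunctions and the projection onto the first $m$ modes are \emph{not} in general, because of possible crossings $\Lambda_M(t)=\Lambda_{M+1}(t)$. The paper handles this by covering $[0,T]$ with finitely many intervals $I_l$ on each of which some $M_l>M$ satisfies $\Lambda_{M_l+1}\neq\Lambda_{M_l}$, works with $P_{M_l},\vQ_{M_l}$ there, and glues via a partition of unity; without this device the time derivatives $\partial_t P_M[\varphi]$, $\partial_t\vQ_M[\vph]$ appearing in the modal acoustic equations need not make sense.
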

In the above  $\vc{H}_0 $ stand for Helmholtz projection onto the space of solenoidal functions on $\Omega_0$.

\section{Low Mach number limit}\label{s:lowmach}



\subsection{Uniform estimates}
\label{sec:uni:est}
Before stating uniform estimates let us recall here some basic notations and results which we need in proving our convergence results. We refer to  \cite{FeNo6}.

First of all we fix a smooth function $\chi \in C^\infty_c ( (0,\infty)\times(0,\infty) )$ such that 
	$0\leq \chi \leq 1, \ \chi=1$  on the set $ {\cal{O}}_{ess}$, where we define
\begin{equation*}\label{ess-set}
	 {\cal{O}}_{ess} =  [ \overline\varrho/2 , 2\overline\varrho ] \times [\overline{\vartheta}/2, 2 \overline{\vartheta}],\quad  
	 {\cal{O}}_{res} = (0,\infty)^2\setminus {\cal{O}}_{ess} .
 \end{equation*}
Namely, ${\cal{O}}_{ess}$ is a neighborhood of the target density and temperature.

Then, we introduce the decomposition on essential and residual part of a measurable function $h$ as follows: we define the decomposition
	\begin{equation*}\label{ess-def}
	h = [h]_{ess} + [h]_{res},\qquad\mbox{ with }\quad  [ h]_{ess} := \chi(\varrho_\ep,\vartheta_\ep) h\,,\quad \  [h]_{res} = (1-\chi(\varrho_\ep,\vartheta_\ep))h\,.
	\end{equation*}

Since $\Div \vc{V} = 0$, from the energy inequality \eqref{m6} and the entropy balance \eqref{m5} we get
\begin{multline}\label{uni_est1}
\left[\int_{\Omega_t} \frac12\varrho |\vu|^2  + \frac 1{\varepsilon^2} \left(H_{\overline \vartheta}(\varrho, \vartheta) - (\varrho - \overline \varrho)\frac{\partial H_{\overline \vartheta}(\overline \varrho, \overline \vartheta)}{\partial \varrho} - H_{\overline \vartheta}(\overline \varrho, \overline \vartheta)\right) - \frac{\varrho - \overline \varrho}{\varepsilon} F\dx\right]_{t=0}^{t=\tau}
\\
+\frac{\overline\vartheta}{\ep^2} \sigma_\ep \left[\cup_{s\in[0,t]} \Omega_s \right]
\leq 
-\int_{0}^\tau \int_{\Omega_t} \varrho \vu \otimes \vu: \nabla_x \vV - \tn{S}:\nabla_x \vV - \varrho \vu \cdot \partial_t \vV\dxdt 
\\
+ \left[\int_{\Omega_t}\varrho \vu \cdot  \vV(t,\cdot)\right]_{t=0}^{t=\tau} - \int_0^\tau \int_{\Omega_t} \frac{\varrho - \overline \varrho}{\varepsilon}\nabla_x F \cdot \vV\dxdt
\end{multline}
where 
$$H_{\overline{\vartheta}}(\vr_\ep,\vt_\ep) = \varrho_\ep(e(\varrho_\ep,\vt_\ep) - \overline{\vartheta}s(\varrho_\ep,\vt_\ep))$$ 
(see \cite[Chapter 2.2.3]{FeNo6})
is a Helmholtz function.

By \eqref{uni_est1} and with \cite[Lemma 5.1]{FeNo6} we obtain the following set of estimates. The details can be found in \cite[Chapter~5]{FeNo6}. 
\begin{Lemma}\label{l:estimates} 
Let assumptions of the Theorem~\ref{t:existence} be satisfied. 
Let $\{(\varrho_\ep,\vu_\ep,\vartheta_\ep)\}_{\ep>0}$ be a sequence of weak solutions obtained in Theorem~\ref{t:existence}. Then the following estimates hold 
\label{estimates}
\begin{equation}\label{est_1}
{\rm ess\,}\sup_{t\in (0,T)} \int_{\Omega_t} [ 1(t) ]_{res} \dx \leq \ep^2 c,
\end{equation}
\begin{equation}\label{est_2}
{\rm ess\,}\sup_{t\in (0,T)} \left\| \left[ \frac{\varrho_\ep - \overline\varrho}{\ep}  \right]_{ess} (t) \right\|_{L^2(\Omega_t)} \leq c,
\end{equation}
\begin{equation}\label{est_3}
{\rm ess\,}\sup_{t\in (0,T)} \left\| \left[ \frac{\vartheta_\ep - \overline\vartheta}{\ep}  \right]_{ess} (t) \right\|_{L^2(\Omega_t)} \leq c,
\end{equation}
\begin{equation}\label{est_4}
{\rm ess\,}\sup_{t\in (0,T)}\int_{\Omega_t} 
\left( [\varrho_\ep]^{\frac{5}{3}}_{res} + [\vartheta_\ep]^4_{res}   \right) (t) \dx \leq \ep^2 c,
\end{equation}
\begin{equation}\label{est_5}
{\rm ess\,}\sup_{t\in (0,T)} \left\|\sqrt{\varrho_\ep} \vu_\ep   \right\|_{L^2(\Omega_t)} \leq c,
\end{equation}
\begin{equation}\label{est_6}
\sigma_\ep [Q_T] \leq \ep^2 c,
\end{equation}
\begin{equation}\label{est_7}
\int_0^T  \left\|  \vu_\ep (t)\right\|^2_{W^{1,2}(\Omega_t)} \dt  \leq  c,
\end{equation}
\begin{equation}\label{est_8}
\int_0^T  \left\| \left( \frac{\vartheta_\ep - \overline\vartheta}{\ep}\right) (t) \right\|^2_{W^{1,2}(\Omega_t)} \dt  \leq  c,
\end{equation}
\begin{equation}\label{est_9}
\int_0^T  \left\| \left( \frac{\log(\vartheta_\ep) - \log(\overline\vartheta)}{\ep}\right) (t) \right\|^2_{W^{1,2}(\Omega_t)} \dt  \leq  c,
\end{equation}
\begin{equation}\label{est_10}
\int_0^T  \left\| \left[ \frac{ \varrho_\ep s(\varrho_\ep, \vartheta_\ep)}{\ep}\right]_{res} (t) \right\|^q_{L^{q}(\Omega_t)} \dt  \leq  c,
\end{equation}
\begin{equation}\label{est_11}
\int_0^T  \left\| \left[ \frac{ \varrho_\ep s(\varrho_\ep, \vartheta_\ep)}{\ep}\right]_{res}  \vu_\ep (t) \right\|^q_{L^{q}(\Omega_t)} \dt  \leq  c,
\end{equation}
\begin{equation}\label{est_12}
\int_0^T  \left\| \left[ \frac{ \kappa(\vartheta_\ep)}{\vartheta_\ep}\right]_{res}  \left( \frac{ \nabla_x \vartheta_\ep}{\ep}\right)(t) \right\|^q_{L^{q}(\Omega_t)} \dt  \leq  c.
\end{equation}
\end{Lemma}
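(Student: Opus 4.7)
The plan is to derive all twelve bounds from the single integrated inequality \eqref{uni_est1}, obtained by combining the energy inequality \eqref{m6} with $\overline\vt$ times the entropy balance \eqref{m5} (tested against $\varphi\equiv 1$), after using the renormalized continuity equation \eqref{m2} to subtract the constant contribution $\overline\vr\,\partial_{\vr}H_{\overline\vt}(\overline\vr,\overline\vt)$. The constraint $\Div\vV=0$ from \eqref{divV} prevents any spurious volume-production term from appearing on the right, and the time-independence of $F$ lets the force-potential fit cleanly inside the Helmholtz structure. Everything then reduces to (i) coercivity of the left-hand side and (ii) absorbing the right-hand side by Gronwall's lemma.

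For step (i), the thermodynamic-stability hypotheses \eqref{pm_1}--\eqref{pm3} yield, following \cite[Lemma 5.1]{FeNo6}, a pointwise lower bound on the Helmholtz bracket in \eqref{uni_est1} of the form $c\,\chi(\vr,\vt)(|\vr-\overline\vr|^2+|\vt-\overline\vt|^2)+c\,(1-\chi(\vr,\vt))(1+\vr^{5/3}+\vt^4)$. After dividing by $\ep^2$, this part of the left-hand side controls exactly the essential and residual quantities in \eqref{est_1}--\eqref{est_4}; the kinetic-energy piece yields \eqref{est_5}, and the nonnegative contribution $\overline\vt\sigma_\ep/\ep^2$ yields \eqref{est_6}.

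For step (ii), the standard moving-domain terms $\int\vr\vu\otimes\vu:\Grad\vV$, $\int\tn{S}:\Grad\vV$, $\int\vr\vu\cdot\partial_t\vV$ and the endpoint $\int_{\Omega_\tau}\vr\vu\cdot\vV(\tau)$ are absorbed by Cauchy--Schwarz/Young against the kinetic energy and the viscous entropy production already on the left, using $\vV\in C^1([0,T];C^3_c(\R^3))$. The genuinely new object is the stratification contribution $-\int\frac{\vr-\overline\vr}{\ep}\Grad F\cdot\vV\dxdt$: splitting $\vr-\overline\vr=[\vr-\overline\vr]_{ess}+[\vr-\overline\vr]_{res}$, the essential part is dominated by $\|[(\vr-\overline\vr)/\ep]_{ess}\|_{L^2}\|\Grad F\|_\infty\|\vV\|_{L^2}$ and is swallowed using \eqref{est_2}, while for the residual part the pointwise bound $|[\vr-\overline\vr]_{res}|\leq [\vr]_{res}+\overline\vr\,[1]_{res}$, together with the $O(\ep^2)$ control on residual density and residual measure already encoded in the coercive left-hand side, compensates the apparent $1/\ep$ loss (a short H\"older--Young bootstrap shows that the resulting time integral is of order $\int_0^\tau\ep\,E(t)\,\dt$, where $E$ denotes the left-hand side of \eqref{uni_est1}). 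Gronwall then closes \eqref{est_1}--\eqref{est_6}.

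The Sobolev bounds \eqref{est_7}--\eqref{est_9} follow from \eqref{sigma1} and \eqref{est_6}: Korn's inequality on $\Omega_t$, in the moving-domain form established in \cite{KMNW2}, applied to the viscous piece $\ep^2\tn{S}:\Grad\vu/\vt$ yields \eqref{est_7}, while the heat-flux piece $-\vq\cdot\Grad\vt/\vt^2$, combined with \eqref{kappa_1}--\eqref{kappa_3}, yields \eqref{est_8}--\eqref{est_9}. The residual entropy-flux estimates \eqref{est_10}--\eqref{est_12} are then routine consequences of \eqref{est_4} and the structural growth of $s$ and $\kappa$, exactly as in \cite[Chapter 5]{FeNo6}. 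The main new obstacle relative to the fixed-domain analysis of \cite{FeNo6} is thus the stratification term: because $\int_{\Omega_t}F\,\dx$ is no longer constant when the domain moves, a naive estimate would lose a factor $1/\ep$, and only the decay of the residual set, delivered by the coercivity step itself, saves uniformity.
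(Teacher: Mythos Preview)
Your proposal is correct and follows essentially the same route as the paper: the paper's own proof is just the sentence ``By \eqref{uni_est1} and with \cite[Lemma 5.1]{FeNo6} we obtain the following set of estimates. The details can be found in \cite[Chapter 5]{FeNo6},'' and you have unpacked precisely that argument. Your explicit identification and treatment of the stratification term $\int \frac{\vr-\overline\vr}{\ep}\Grad F\cdot\vV$ via the essential/residual splitting and Gronwall is the one genuinely new ingredient relative to the fixed-domain case, and the paper leaves it implicit.
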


\bigskip
Let us introduce the following notation 
$$
\varrho_\varepsilon^{(1)} = \frac{\varrho_\varepsilon - \overline \varrho}{\varepsilon},\qquad\vartheta_\varepsilon^{(1)} = \frac{\vartheta_\varepsilon - \overline \vartheta}{\varepsilon}.
$$

We assume that $\varrho_\ep$ is extended to the whole space $\R^3$ by a constant $\overline \varrho$. Similarly, we extend also the velocity and the temperature to the whole space by a standard extension $E_t:W^{1,2}(\Omega_t)\mapsto W^{1,2}(\R^3)$ which is uniformly bounded with respect to $t\in [0,T]$ (we refer to \cite{AdFo}) as the fluid domain is regular at each time. We skip $E_t$ in the notation so from now on it holds that $\vartheta_\varepsilon = E_t \vartheta_\varepsilon$ and $\vu_\varepsilon = E_t \vu_\varepsilon$.

As a consequence of the above Lemma~\ref{l:estimates} we obtain the following convergences (for details see \cite[Chapter~5.3]{FeNo6})
\bFormula{conv_001}
\varrho_\varepsilon  - \overline \varrho \to 0  \mbox{ in }L^\infty(0,T;L^{5/3}(\R^3)),
\eF
\bFormula{conv_002}
\left( t \to \int_{\Omega_t}(\varrho_\varepsilon - \overline \varrho) \varphi\dx \right)   \to 0  \mbox{ in }C([0,T]) \mbox{ for all }   \varphi \in L^{r'}(\Omega_t)  \mbox{ with }r\in [1,5/3),
\eF
\bFormula{conv_003}
\vartheta_\varepsilon - \overline \vartheta \to 0 \mbox{ in }L^\infty(0,T;L^2(\R^3)),
\eF
\bFormula{conv_004}
\varrho_\varepsilon^{(1)} \weakstar \varrho^{(1)} \mbox{ weakly}^*\mbox{ in } L^\infty(0,T;L^{5/3}(\R^3)),
\eF
\bFormula{conv_005}
\vartheta_\varepsilon^{(1)} \weak \vartheta^{(1)}  \mbox{ weakly in } L^2(Q_T),
\eF
\bFormula{conv_006}
\nabla_x \vartheta_\varepsilon^{(1)} \weak \nabla_x \vartheta^{(1)}  \mbox{ weakly in } L^2(Q_T),
\eF
\bFormula{conv_007}
\vu_\varepsilon \weak \vU  \mbox{ weakly in } L^2(0,T;L^6(\R^3)).
\eF
\bFormula{conv_008}
\vu_\varepsilon \weak \vU  \mbox{ weakly in } L^2(0,T;W^{1,2}(\R^3)),
\eF
\bFormula{conv_009}
\left[ \frac{\vt_\ep - \vtb}{\ep} \right]_{ess} 
\weakstar \vartheta^{(1)}
\mbox{ weakly}^* \mbox{ in } L^\infty(0,T;L^2(\R^3)),
\eF
\bFormula{conv_011}
\left[ \frac{\vr_\ep s(\vr_\ep,\vt_\ep)}{\ep} \right]_{res}
\to 0 \mbox{ in } L^q(Q_T) \mbox{ for certain }q>1,
\eF
\bFormula{conv_012}
\left[ \frac{\vr_\ep}{\ep}  \right]_{res}
\to 0 \mbox{ in } L^\infty(0,T; L^{\frac{5}{3}}(\R^3)),
\eF
\bFormula{conv_013}
[\vr_\ep]_{ess} \frac{[s(\vr_\ep,\vt_\ep)]_{ess} - s(\vrb, \vtb)}{\ep}
\weakstar
\vrb \left( \partial_\vr s (\vrb,\vtb) \vr^{(1)} 
+ \partial_\vr s (\vrb,\vtb) \vt^{(1)} \right)
\mbox{ weakly}^* \mbox{ in }L^\infty(0,T; L^2 (\R^3)),
\eF
\bFormula{conv_0130}
[\vr_\ep]_{ess} \frac{[s(\vr_\ep,\vt_\ep)]_{ess} - s(\vrb, \vtb)}{\ep} \vu_\ep
\weak
\vrb \left( \partial_\vr s (\vrb,\vtb) \vr^{(1)} 
+ \partial_\vt s (\vrb,\vtb) \vt^{(1)} \right) \vU
\mbox{ weakly in }L^2(0,T; L^{\frac{3}{2}} (\R^3)),
\eF
\bFormula{conv_014}
\left[\frac{\kappa(\vt_\ep)}{\vt} \right]_{ess} 
\nabla_x \left( \frac{\vt_\ep - \vtb}{\ep}\right)
\weak
\frac{\kappa(\vtb)}{\vtb} \nabla_x \vt^{(1)} \mbox{ weakly in }
L^2(0,T;L^2(\R^3)),
\eF
\bFormula{conv_015}
\left[ \frac{\kappa(\vt_\ep)}{\vt_\ep} \right]_{res}
\nabla_x \left( \frac{\vt_\ep}{\ep} \right) 
\to 0 \mbox{ in } L^q (Q_T) \mbox{ for certain }q>1,
\eF
\bFormula{conv_017}
\vr_\ep \vu_\ep \otimes \vu_\ep \weak \overline{\vr \vU \otimes \vU }
\mbox{ weakly in } L^2( 0,T; L^{\frac{30}{29}}(\R^3)),
\eF
\bFormula{conv_018}
\tn{S}_\ep \weak \mu(\vtb)(\nabla_x \vU + \nabla_x^t \vU) \mbox{ weakly in } L^q(Q_T) \mbox{ for certain } q>1.
\eF

\subsection{Limit in the continuum equation}
We recall the Reynolds transport theorem:
\begin{Theorem}\label{t:transport}
Let a general function $f = f(t,x)$ belong to $C^1((t_1,t_2);W^{1,\infty}(\Omega_t))$
and let $\vV\in C^{1}(\mathbb R^+\times \mathbb R^3)$. Then for each $t\in (t_1,t_2)$ there exists a finite derivative
$$
\frac{{\rm d}}{{\rm d}t}\int_{\Omega_t}f(t,x)\dx = \int_{\Omega_t}\left( \partial_t f(t,x) + \Div (f\vV)(t,x)\right) \dx.
$$
\end{Theorem}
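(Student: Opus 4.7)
The plan is to reduce the moving-domain identity to a fixed-domain one via the Lagrangian flow. Let $\vc{X}(t,\cdot):\Omega_0 \to \Omega_t$ be the diffeomorphism given by \eqref{X.t}; since $\vV \in C^1$, standard ODE theory makes $\vc{X}$ continuously differentiable in $t$ and in the spatial variable, with Jacobian $J(t,y) = \det \Grad_y \vc{X}(t,y)$ that is strictly positive, continuous, and satisfies Jacobi's formula $\partial_t J(t,y) = (\Div \vV)(t, \vc{X}(t,y)) \, J(t,y)$. Changing variables,
$$\int_{\Omega_t} f(t,x) \dx = \int_{\Omega_0} f(t, \vc{X}(t,y)) \, J(t,y) \dy,$$
and the plan is to differentiate under the (now $t$-independent) integral on the right, apply Jacobi's formula, and transport back to $\Omega_t$.

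For the pointwise differentiation of $F(t,y) := f(t, \vc{X}(t,y)) \, J(t,y)$ I would use the chain rule
$$\partial_t F(t,y) = \bigl[(\partial_t f) \circ \vc{X} + (\Grad f \circ \vc{X}) \cdot (\vV \circ \vc{X})\bigr] J + (f \circ \vc{X}) \, \partial_t J,$$
holding for a.e. $y \in \Omega_0$; then Jacobi's formula turns the right-hand side into $\bigl[(\partial_t f + \Div(f\vV)) \circ \vc{X}\bigr] J$. The exchange of derivative and integral is justified by dominated convergence, using the uniform bounds on $\|\partial_t f(t,\cdot)\|_{W^{1,\infty}(\Omega_t)}$ and $\|f(t,\cdot)\|_{W^{1,\infty}(\Omega_t)}$ coming from the $C^1\bigl((t_1,t_2);W^{1,\infty}(\Omega_t)\bigr)$ assumption, together with the continuity and strict positivity of $J$. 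A final change of variables back to $\Omega_t$ yields the claimed identity.

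The main obstacle is the chain-rule identity along the flow for $f$ of merely $W^{1,\infty}$ spatial regularity, since for such $f$ the spatial gradient exists only a.e. and composition with $\vc{X}(t,\cdot)$ has to be controlled. I would handle this by approximation: extend $f$ to a function in $C^1\bigl((t_1,t_2);W^{1,\infty}(\mathbb{R}^3)\bigr)$ via a uniformly bounded extension operator (available since $\Omega_t$ remains of class $C^{2+\nu}$), then mollify in the spatial variable to obtain $f^\delta \in C^1\bigl((t_1,t_2); C^\infty(\mathbb{R}^3)\bigr)$ for which the classical chain rule applies. Because $\Grad f^\delta \to \Grad f$ in $L^p_{\rm loc}$ for every $p<\infty$ while staying uniformly bounded, and because $\vc{X}(t,\cdot)$ is bi-Lipschitz with Jacobian bounded above and below, the identity $\Dt \int_{\Omega_0} f^\delta(t,\vc{X}(t,y)) \, J(t,y) \dy = \int_{\Omega_0} [\partial_t f^\delta + \Div(f^\delta \vV)] \circ \vc{X} \, J \dy$ passes to the limit $\delta \to 0$, giving the theorem. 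Since this result is a standard tool invoked in the paper only to differentiate mass-like quantities, its role in the overall argument is modest and no subtler refinement is required.
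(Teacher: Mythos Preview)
Your argument is correct and follows the standard route to the Reynolds transport theorem: pull back to the reference domain via the flow $\vc{X}(t,\cdot)$, differentiate under the integral using Jacobi's formula $\partial_t J = (\Div\vV)\circ\vc{X}\cdot J$, and change variables back. The mollification step to justify the chain rule for $W^{1,\infty}$ spatial regularity is a reasonable precaution, though arguably unnecessary here since Lipschitz functions are differentiable a.e.\ and the bi-Lipschitz change of variables preserves null sets.

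Note, however, that the paper does not actually prove this theorem: it simply states ``We recall the Reynolds transport theorem'' and uses it as a known tool. So there is no proof in the paper to compare against; your write-up would serve as a self-contained justification the authors chose to omit.
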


We proceed with the limit in the continuum equation. By \eqref{conv_001} and  \eqref{conv_008} we obtain the boundary condition
\begin{equation}\label{eq:UBC}
\vU\cdot \vn = \vV\cdot\vn \qquad \text{ on } \Gamma_\tau
\end{equation}
in the sense of traces. Moreover, passing with $\ep \rightarrow 0$ in \eqref{m2} with the choice $b(\vr) \equiv 0$, $B(\vr) \equiv 1$ we achieve
\begin{equation*}
\vrb \intint{(\partial_t \varphi + \vU \cdot \nabla_x \varphi)}  = - \vrb\int_{\Omega_0} \varphi(0,\cdot) \dx
\end{equation*}
for all $\varphi \in C^1_c([0,T)\times \mathbb{R}^3)$.  We use the transport Theorem \ref{t:transport} to conclude that 
$$\Div \vU = 0 \quad \mbox{ a.e. in } Q_T.$$ Indeed,
\begin{align}
&\intint{(\partial_t \varphi + \vU \cdot \nabla_x \varphi)} = \intint{(\partial_t \varphi - \Div \vU \varphi)} + \int_0^T \int_{\Gamma_t} \vU\cdot\vn \varphi\ \dS \dt \nonumber \\
&=\intint{ (\partial_t \varphi - \Div \vU \varphi)} + \int_0^T \int_{\Gamma_t} \vV\cdot\vn \varphi \dS\dt = \intint{(\partial_t \varphi - \Div \vU \varphi + \Div(\vV\varphi))} \nonumber \\
&=- \intint{\Div \vU \varphi} + \int_0^T\left(\frac{\mathrm{d}}{\mathrm{dt}} \int_{\Omega_t} \varphi \dx \right) \dt=  -\intint{ \Div \vU \varphi} - \int_{\Omega_0} \varphi(0,\cdot) \dx \nonumber
\end{align}
and thus $\intint{ \Div \vU \varphi} = 0$ for all $\varphi \in C^1_c([0,T)\times \mathbb{R}^3)$.

\subsection{Limit in the balance of entropy}

Similarly as in \cite[Section 5.3.2]{FeNo6}, we deduce that by convergences from Section~\ref{sec:uni:est} the balance of entropy \eqref{m5} in the limit $\ep \to 0$ may take the following form
\begin{align}\label{limitentropy}
&\int_0^T\int_{\Omega_t} \overline \varrho \left(\partial_\varrho s(\vrb,\vtb) \varrho^{(1)} + \partial_\vartheta s(\vrb,\vtb) \vartheta^{(1)}\right)(\partial_t\varphi + \vU \cdot \nabla_x \varphi)\dxdt \\ \nonumber
&\quad - \int_0^T\int_{\Omega_t} \frac{\kappa(\overline \vartheta)}{\overline\vartheta} \nabla_x \vartheta^{(1)}\cdot \nabla_x \varphi\dxdt = -\int_{\Omega_0} \overline \varrho \left(\partial_\varrho s(\vrb,\vtb) \varrho^{(1)} + \partial_\vartheta s(\vrb,\vtb) \vartheta^{(1)}\right)_0 \varphi(0,\cdot)\dx.
\end{align}
for any $\varphi \in C^1 (\overline Q_T)$, $\varphi(T,\cdot) = 0$.

Further, we multiply the momentum equation \eqref{m3} by $\varepsilon $ and we let $\varepsilon \to 0$ in order to obtain
\begin{equation}\label{limmomeps}
\int_0^T\int_{\Omega_t}\left(\partial_\varrho p(\overline \varrho, \overline \vartheta) \varrho^{(1)} + \partial_{\vartheta}p(\overline \varrho, \overline \vartheta)\vartheta^{(1)}\right)\Div \vph\dxdt 
= -\int_0^T \int_{\Omega_t}\overline \varrho \nabla_x F\cdot \vph \dxdt .
\end{equation}
for any $\vph \in C^1 (\overline Q_T)$, $\vph(T,\cdot) = 0$, $\vph \cdot \vc{n}|_{\Gamma_\tau} = 0$. 

We define 
\begin{equation*}
C(t):=  \fint_{\Omega_t}F(x)\, {\rm d}x
\end{equation*}
and we can assume, without loss of generality, that $C(0) =0$. The conservation of mass together with the assumption \eqref{initial_2} yields $\int_{\Omega_t} \varrho^{(1)} = 0$, whereas the same property for the temperature $\int_{\Omega_t}\vartheta^{(1)} = 0$ is a consequence of \eqref{limitentropy} and \eqref{initial_2}. These properties and \eqref{limmomeps} yield
\begin{equation}
\label{rhojedna}
\varrho^{(1)} = -\frac{\partial_\vartheta p(\vrb,\vtb)}{\partial_{\varrho}p(\vrb,\vtb)} \vartheta^{(1)} + \frac{\overline \varrho F}{\partial_{\varrho}p(\vrb,\vtb)} - \frac{\overline \varrho C(t)}{\partial_\varrho p(\vrb,\vtb)}.
\end{equation}
In order to simplify notation we introduce
\begin{equation*}\label{cp}
c_p(\varrho,\vartheta) = \de_\vartheta e(\varrho, \vartheta) + \alpha (\varrho,\vartheta) \frac{\vartheta}{\varrho} \de_\vartheta p(\vr,\vt),  \qquad \overline{c_p} = c_p(\vrb,\vtb),
\end{equation*}
\begin{equation*}\label{alpha}
\alpha(\varrho,\vartheta) = \frac{1}{\varrho} \frac{\partial_\vartheta p(\varrho,\vartheta)}{\partial_\varrho p(\varrho,\vartheta)}, \qquad \overline{\alpha} = \alpha(\vrb,\vtb).
\end{equation*}
We plug \eqref{rhojedna} into \eqref{limitentropy}, we multiply it by $\overline \vartheta$ and we employ Maxwell and Gibbs relations in order to get
\begin{align} \label{limittheta}
&\int_0^T \int_{\Omega_t} \overline \varrho \left(\overline{c_p} \vartheta^{(1)} - \overline \vartheta \overline{\alpha} (F - C)\right)\left(\partial_t \varphi + \vU\cdot \nabla_x \varphi\right)\dxdt\\ \nonumber
& \quad - \int_0^T\int_{\Omega_t} \kappa(\overline \vartheta) \nabla _x\vartheta^{(1)}\cdot \nabla_x \varphi\dxdt = -\int_{\Omega_0} \overline\vartheta\overline \varrho \left(\partial_\varrho s(\vrb,\vtb) \varrho^{(1)}_0 + \partial_\vartheta s(\vrb,\vtb) \vartheta^{(1)}_0\right) \varphi(0,\cdot)\dx 
\end{align}
for any $\varphi \in C^1 (\overline Q_T)$, $\varphi(T,\cdot) = 0$.

We use the transport Theorem \ref{t:transport} and \eqref{eq:UBC} to observe that for all $\varphi \in C^1 (\overline Q_T)$, $\varphi(T,\cdot) = 0$ it holds
\begin{align} \nonumber
&\int_0^T \int_{\Omega_t} F(x)(\de_t \varphi + \vU\cdot\nabla_x\varphi)\dxdt \\ \nonumber 
&\quad= \int_0^T\int_{\Omega_t} \left( F\de_t\varphi - \varphi \Div(F\vU) \right)\dxdt + \int_0^T \int_{\Gamma_t} F\varphi\vU\cdot\vn\,{\rm d} S \dt \\
&\quad = \int_0^T \int_{\Omega_t} \left(\de_t(F\varphi) + \Div(F\varphi\vV) - \varphi \vU\cdot\nabla_x F\right) \dxdt  \nonumber \\
&\quad = \int_0^T \left(\frac{{\rm d}}{\dt} \int_{\Omega_t} F\varphi \dx\right) \dt - \int_0^T\int_{\Omega_t}\varphi \vU\cdot\nabla_x F \dxdt \nonumber \\ 
&\quad = -\int_{\Omega_0} F(x)\varphi(0,x) \dx - \int_0^T\int_{\Omega_t}\varphi \vU\cdot\nabla_x F \dxdt,\nonumber
\end{align}
whereas for the same class of test functions $\varphi$ we have (recalling \eqref{eq:Gdef} where we defined $G(t)$)
\begin{align} \nonumber
&\int_0^T \int_{\Omega_t} C(t)(\de_t \varphi + \vU\cdot\nabla_x\varphi)\dxdt \\ \nonumber 
&\quad = \int_0^T\int_{\Omega_t}\left( C\de_t\varphi - \varphi \Div(C\vU)\right)\dxdt + \int_0^T \int_{\Gamma_t} C\varphi\vU\cdot\vn\,{\rm d} S \dt \\
&\quad = \int_0^T \int_{\Omega_t}\left( \de_t(C\varphi) + \Div(C\varphi\vV) - \de_t C \varphi \right) \dxdt  \nonumber \\
&\quad = \int_0^T \frac{{\rm d}}{\dt} \int_{\Omega_t} C(t)\varphi(t,x) \dxdt - \int_0^T\int_{\Omega_t}G(t)\varphi(t,x) \dxdt = - \int_0^T\int_{\Omega_t}G(t)\varphi(t,x) \dxdt.\nonumber
\end{align}
Here we have used $C(0) = 0$ and the observation that 
\begin{equation*}
\frac{{\rm d}}{\dt} C(t) = \frac{{\rm d}}{\dt}\fint_{\Omega_t} F(x) \dx = \fint_{\Omega_t} \Div (F\vV)\dx = \fint_{\Omega_t} \vV\cdot \nabla_x F \dx = G(t).
\end{equation*}


Taking $\Theta = \vartheta^{(1)}$, we deduce from \eqref{limittheta} the following equation
$$
\overline \varrho\, \overline c_p \partial_t \Theta + \overline \varrho\, \overline c_p\Div (\Theta \vU) - \kappa(\vtb) \Delta_x \Theta - \overline \alpha\,\overline \varrho\, \overline \vartheta \vU  \cdot\nabla_x F = - \overline \alpha\,\overline \varrho\, \overline \vartheta G,
$$
with the initial data
\begin{equation}\label{ini:theta}
\overline c_p\Theta_0 = \overline{\vartheta}\left(\partial_\varrho s(\vrb,\vtb) \varrho^{(1)}_0 + \partial_\vartheta s(\vrb,\vtb) \vartheta^{(1)}_0 + \overline \alpha F\right).
\end{equation}

Finally, we define 
\begin{equation*}
r := \varrho^{(1)}  - \frac{\overline \varrho}{\partial_\varrho p(\vrb,\vtb)}(F-C).
\end{equation*}
Then, \eqref{rhojedna} yields the Boussinesq relation
$$
r + \overline \varrho \,\overline \alpha \Theta = 0.
$$

\subsection{Limit in the momentum equation}

Since $\Div \vU = 0$ we may take as a test function $\vph\in C_c^1(\overline{Q_T})$, $\vph(T,\cdot) = 0$, $\vph \cdot \vc{n}|_{\Gamma_t} = 0$ such that  $\Div \vph = 0$ in $Q_T$  when passing to the limit in the momentum equation \eqref{m3}. Relations \eqref{conv_001}--\eqref{conv_018} imply
$$
\int_0^t \int_{\Omega_t} \overline \varrho \vU \cdot \partial_t \vph + \overline{\varrho \vU \otimes \vU}:\Grad \vph + \mu(\vtb)\nabla_x\vU:\Grad \vph - \varrho^{(1)} \nabla_x F \cdot \vph \dxdt 
= \int_{\Omega_0}\overline \varrho \vU_0 \cdot \vph(0,\cdot)\dx
$$
where $\overline{\varrho \vU\otimes \vU}$ is the weak limit of $\varrho_\varepsilon \vu_\varepsilon \otimes \vu_\varepsilon$. Note that if 
\bFormula{eq:goal0}
\int_0^T\int_{\Omega_t}\overline{\varrho \vU \otimes \vU}:\Grad \vph\dx = \int_0^T\int_{\Omega_t}(\overline \varrho \vU \otimes \vU):\Grad \vph \dx
\eF
for all $\vph\in C_c^1(\overline{Q_T})$, $\vph(T,\cdot) = 0$, $\vph \cdot \vc{n}|_{\Gamma_t} = 0$ such that  $\Div \vph = 0$ in $Q_T$,
the Theorem~\ref{t:main} is proven. The rest of this paper is devoted to the analysis of this particular limit.

We start with a simple observation, that due to the uniform bound of $\vre\vue\otimes\vue$ in $L^2(0,T;L^{\frac{30}{29}}(\R^3))$ it suffices to show \eqref{eq:goal0} for test functions compactly supported in $\overline Q_T \cap \left((0,T)\times \mathbb R^3\right)$.

\section{Limit in the convective term}\label{s:convec}

\subsection{Helmholtz decomposition}\label{s:helmholtz}

We introduce the Helmholtz decomposition $\vv = \H_t[\vv] + \H_t^{\bot}[\vv]$ in $L^2(\Omega_t;\mathbb{R}^3)$ in a standard way. Namely we define the projection $\H_t^{\bot}[\vv] = \Grad\Psi$ as the unique solution to the Neumann problem
\begin{equation}\label{eq:df_Psi}
\Delta\Psi = \Div \vv \text{ in } \Omega_t, \quad \Grad\Psi\cdot\vn = \vv\cdot\vn \text{ on } \Gamma_t, \quad \int_{\Omega_t} \Psi \,\dx = 0.
\end{equation}
Hence, it is easy to observe that 
\begin{equation*}
\Div \H_t[\vv] = 0 \,\, \text{ in } \Omega_t \quad \text{ and } \quad 
\H_t[\vv]\cdot\vn = 0\,\,\text{ on } \Gamma_t.
\end{equation*}

\begin{Lemma}
Let $\vz$ be such that $\partial_t \Div \vz \in W^{2,2}(\Omega_t)$ and $\partial_t \vz \in W^{3/2,2}(\Gamma_t)$. Then $\partial_t \H_t[\vz] \in W^{1,2}(\Omega_t)$.
\end{Lemma}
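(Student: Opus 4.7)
The strategy is to exploit the decomposition $\H_t[\vz] = \vz - \Grad \Psi$, where $\Psi$ solves the Neumann problem \eqref{eq:df_Psi}, and to derive a Neumann problem on $\Omega_t$ for the auxiliary unknown $\Phi := \partial_t \Psi$. Once $\Grad \Phi \in W^{1,2}(\Omega_t)$ is established, the conclusion follows from the identity $\partial_t \H_t[\vz] = \partial_t \vz - \Grad \Phi$.

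Differentiating the interior equation $\Delta_x \Psi = \Div \vz$ in $t$ yields $\Delta_x \Phi = \partial_t \Div \vz$, which by hypothesis lies in $W^{2,2}(\Omega_t) \subset L^2(\Omega_t)$. For the boundary condition one cannot differentiate pointwise in $x$, because both $\Gamma_t$ and $\vn$ depend on $t$; instead I would apply the material derivative $D_t = \partial_t + \vV \cdot \Grad$ to the identity $\Grad \Psi \cdot \vn = \vz \cdot \vn$ along the flow of $\vV$ (equivalently, pull everything back to the fixed reference surface $\Gamma_0$ via the flow map $\vc{X}(t,\cdot)$, differentiate there, and push forward). Using $\vz - \Grad \Psi = \H_t[\vz]$ together with $\H_t[\vz] \cdot \vn = 0$ on $\Gamma_t$, this leads to a boundary condition of the form
\begin{equation*}
\Grad \Phi \cdot \vn = \partial_t \vz \cdot \vn + (\vV \cdot \Grad)\H_t[\vz] \cdot \vn + \H_t[\vz] \cdot D_t \vn \qquad \text{on } \Gamma_t,
\end{equation*}
in which the first term is in $W^{3/2,2}(\Gamma_t)$ by the assumption $\partial_t \vz \in W^{3/2,2}(\Gamma_t)$, and the remaining two are lower-order contributions governed by the smoothness of $\vV$ and $\vn$ (inherited from $\Gamma_0 \in C^{2+\nu}$ and $\vV \in C^1([0,T];C^3_c)$) together with the boundary regularity of $\H_t[\vz]$ coming from elliptic regularity for the original problem \eqref{eq:df_Psi}. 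The required compatibility condition is inherited from the corresponding identity for $\Psi$ by integration.

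With source in $L^2(\Omega_t)$ and boundary datum in $W^{1/2,2}(\Gamma_t)$, standard elliptic regularity for the Neumann Laplacian on a $C^{2+\nu}$ domain yields $\Phi \in W^{2,2}(\Omega_t)$, and hence $\partial_t \Grad \Psi = \Grad \Phi \in W^{1,2}(\Omega_t)$, which together with the identity $\partial_t \H_t[\vz] = \partial_t \vz - \Grad \Phi$ proves the claim. The main obstacle is the rigorous justification of the material differentiation on the moving boundary: one has to show that the difference quotients $h^{-1}(\Psi(t+h,\cdot) - \Psi(t,\cdot))$, which a priori live on different domains, actually converge to a function solving the Neumann problem derived above. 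This is handled cleanly through the Lagrangian pullback to $\Omega_0$, which reduces the question to ordinary $t$-differentiation of an elliptic problem with time-dependent coefficients on a fixed domain and automatically produces the geometric correction terms displayed above.
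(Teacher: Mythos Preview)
Your approach via the shape-derivative calculus on the Neumann problem defining $\Psi$ is precisely what the paper's cited references (the Sokolowski--Zol\'esio monograph \cite{sozo} and \cite[Section~3.1]{FKNNS14}) carry out; the paper itself gives no argument beyond pointing to these, so your sketch is essentially the intended proof.

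One remark on the final step: the implication $\partial_t \Ht[\vz] = \partial_t \vz - \Grad\Phi \in W^{1,2}(\Omega_t)$ tacitly requires $\partial_t \vz \in W^{1,2}(\Omega_t)$ in the interior, but the stated hypotheses give only the boundary trace $\partial_t \vz|_{\Gamma_t} \in W^{3/2,2}(\Gamma_t)$ and the scalar $\partial_t\Div\vz$ inside. What your argument actually establishes from the stated hypotheses alone is $\partial_t \Hpt[\vz] = \Grad\Phi \in W^{1,2}(\Omega_t)$. In the paper's sole application of the lemma one has $\vz = \vph \in C^1_c(\overline{Q_T})$, so the missing interior regularity is trivially available and the issue is harmless; but the gap lives in the lemma's formulation rather than in your method.
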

\begin{proof}
See \cite{sozo} and \cite[Section 3.1]{FKNNS14}.
\end{proof}

\subsection{Compactness of the solenoidal part}

Although $\Div \vU = 0$ a.e. in $Q_T$, we cannot conclude that $\Ht[\vU] = 0$ due to the inhomogeneous boundary condition $\vU\cdot\vn = \vV\cdot\vn$ on $\Gamma_t$. Instead we have 
\begin{equation*}
\vU = \Ht[\vU] + \nabla_x W, \qquad \text{where } \nabla_x W = \Hpt[\vV].
\end{equation*}
Note that 
\begin{equation*}
\intint{\Ht[\vv]\cdot \vph} = \intint{\vv\cdot\Ht[\vph]}
\end{equation*}
for all $\vv,\vph \in L^2(Q_T)$, this property will be used extensively throughout the rest of this paper.

Moreover, we also have
\begin{equation*}
\|\Ht[\vv]\|_{L^q(\Omega_\tau)} \leq c(q) \|\vv \|_{L^q(\Omega_\tau)}
\end{equation*}
for any $2 \leq q < \infty$ and $\tau \in [0,T]$ due to the elliptic regularity theory. Since the domains $\Omega_\tau$ are regular, the constant $c(q)$ can be chosen independently of $\tau$, see \cite[Theorem 1.2]{FKS}.

Convergences  from Section \ref{sec:uni:est} imply that for $\ep \to 0$
\begin{equation*}
\begin{split}
\Ht[\vue] &\weak \Ht[\vU] \quad \text{ weakly in } L^2(Q_T), \\
\nabla_x \Ht[\vue] &\weak \nabla_x\Ht[\vU] \quad \text{ weakly in } L^2(Q_T), \\
\Hpt[\vue] &\weak \nabla_x W \quad \text{ weakly in } L^2(Q_T), \\
\nabla_x \Hpt[\vue] &\weak \nabla^2_x W \quad \text{ weakly in } L^2(Q_T).
\end{split}
\end{equation*}

We want to prove strong convergence of $\Ht[\vue]$ to $\Ht[\vU]$ in $L^2(Q_T)$. To this end we test the momentum equation \eqref{m3} by $\Ht[\vph]$ with $\vph \in C^1_c(\overline{Q_T})$, $\vph(T) = 0$, $\vph\cdot\vn = 0$ on $\Gamma_\tau$.

We denote
\begin{equation*}
I_\vph^\ep(t) := \int_{\Omega_t} (\vre\vue)(t,x)\cdot\Ht[\vph(t,x)] \dx = \int_{\Omega_t} \Ht[(\vre\vue)(t,x)]\cdot\vph(t,x)\dx 
\end{equation*}
and, consequently,
\begin{align} \nonumber
I_\vph^\ep(t) - I_\vph^\ep(t') &= \int_t^{t'}\int_{\Omega_\tau} \left(\vre\vue\otimes\vue - \tn{S}(\vte,\nabla_x\vue)\right):\nabla_x\Ht[\vph] + \frac{\vr_\ep-\vrb}{\ep}\nabla_x F \cdot \Ht[\vph] \dxdt \\ 
 &+ \int_t^{t'}\int_{\Omega_\tau} \vre\vue\cdot\de_t\Ht[\vph] \dxdt. \label{eq:AA1}
\end{align}
We estimate the right hand side of \eqref{eq:AA1} by 
\eqref{est_2}, \eqref{est_4}, \eqref{est_5}, \eqref{est_7}
and the regularity of the time derivative of $\Ht[\vph]$. We end up with 
\begin{equation*}
\left|I_\vph^\ep(t) - I_\vph^\ep(t')\right| \leq C |t-t'|^{1/2}.
\end{equation*}
Fix a time interval $[T_1,T_2]$ and an open set $K \subset \R^3$ such that $[T_1,T_2]\times \overline{K} \subset Q_T$. We use the Arzel\'a-Ascoli theorem to conclude that $I_\vph^\ep$ is precompact in $C(T_1,T_2)$ and therefore 
\begin{equation*}
\Ht[\vre\vue] \sil \Ht[\vrb\vU] \quad \text{ strongly in } C_{w}(T_1,T_2;L^{5/4}(K)).
\end{equation*}
This implies that
\begin{equation*}
\Ht[\vre\vue] \sil \Ht[\vrb\vU] \quad \text{ strongly in } L^p(T_1,T_2;W^{-1,2}(K)), 1\leq p < \infty.
\end{equation*}
We also have
\begin{equation*}
(\vre-\vrb)\vue = \ep\frac{\vre-\vrb}{\ep}\vue \sil 0 \mbox{ strongly in }L^2(T_1,T_2;L^{30/23}(K))
\end{equation*}
 which yields the same property for $\Ht[(\vre-\vrb)\vue]$ and for $\Hpt[(\vre-\vrb)\vue]$. Therefore we can write
\begin{equation*}
\vrb\Ht[\vue] \vue
= (\Ht[(\vrb-\vre)\vue]+ \Ht[\vre\vue])\cdot\vue  \weak \vrb |\vU|^2 \text{ weakly in } L^1((T_1,T_2) \times K);
\end{equation*}
in particular
\begin{equation*}
\int_{T_1}^{T_2}\int_{K}|\Ht[\vue]|^2  = \int_{T_1}^{T_2}\int_{K} {\Ht[\vue]\cdot \vue}  \to \int_{T_1}^{T_2}\int_{K} |\vU|^2 
\end{equation*}
and we conclude that 
\begin{equation*}
\Ht[\vue] \sil \Ht[\vU] \quad \text{ strongly in } L^2([T_1,T_2] \times \overline K).
\end{equation*}
We deduce that in order to show \eqref{eq:goal0} it is 
enough to prove that
\begin{equation}\label{eq:goal}
\intint{\Ht^\perp[\vre(\vue-\vV)]\otimes\Ht^\perp[(\vue-\vV)]:\nabla_x \vph} \sil 0
\end{equation}
for all test functions $\vph \in C^1_c(\overline{Q_T})$, $\Div \vph = 0$, $\vph(0,\cdot) = \vph(T,\cdot) = 0$, $\vph\cdot\vn|_{\Gamma_t} = 0$  (for details see \cite[Section 3.3]{FKNNS14}).

\subsection{Acoustic equation}

We rewrite the continuity equation \eqref{m2} (with $B \equiv 1, b \equiv 0$) and the momentum equation \eqref{m3} 
in the form of the acoustic analogy. To this end we reformulate both in a new variables
\begin{equation}\label{eq:newvar}
\vr_\ep^{(1)} = \frac{\vr_\ep-\vrb}{\ep}, \qquad \vz_\ep = \vr_\ep(\vu_\ep - \vV).
\end{equation}
The continuity equation \eqref{m2} reads as 
\begin{equation}\label{eq:CEac}
\intint{\ep \vr_\ep^{(1)} \partial_t \varphi + \vz_\ep\cdot\nabla_x\varphi} = -\intint{\ep \vr_\ep^{(1)} \vV\cdot\nabla_x\varphi}
\end{equation}
for any $\varphi \in C^\infty_c(\overline{Q_T})$, $\varphi(0,\cdot)=0$, $\varphi(T,\cdot) = 0$ and the momentum equation \eqref{m3} can be written in the following form
\begin{align}
&\intint{\ep\vz_\ep\cdot\de_t\vph + \left[\frac{[p(\vre,\vte)]_{ess} - p(\vrb,\vtb)}{\ep} - \vrb F\right]\Div\vph} \nonumber \\
& = \ep \intint{\frac{\vrb-\vre}{\ep}\nabla_x F \cdot \vph} + \ep\intint{\left(\HH^1_\ep:\nabla_x\vph + \vh^2_\ep\cdot\vph\right)} \label{eq:MEac}
\end{align}
for any $\vph \in C^\infty_c(\overline{Q_T})$, $\vph(0,\cdot)=0$, $\vph(T,\cdot)=0$, $\vph\cdot\vn|_{\Gamma_t} = 0$ for $t\in [0,T]$, where we set
\begin{align}
\HH^1_\ep &= -\vre\vue\otimes\vue + \tn{S}_\ep - \frac{[p(\vre,\vte)]_{res}}{\ep^2}\tn{I} + \vre\vue\otimes\vV ,\label{eq:H1}\\
\vh^2_\ep &= \vre \de_t \vV + \vre\vue\cdot\nabla_x\vV \label{eq:h2}.
\end{align}
We also need the entropy balance \eqref{m5} in the form
\begin{equation}\label{eq:EBac}
\intint{\ep\left(\vre\frac{s(\vre,\vte) - s(\vrb,\vtb)}{\ep}\right)\de_t \varphi} = \intint{\vh^3_\ep\cdot\nabla_x\varphi} - \langle\sigma_\ep;\varphi\rangle
\end{equation}
for all  $\varphi \in C^\infty_c(\overline{Q_T})$, $\varphi(0,\cdot)=0$, $\varphi(T,\cdot) = 0$, where 
\begin{equation}\label{eq:h3}
\vh^3_\ep = \frac{\kappa(\vte)}{\vte}\nabla_x\vte - \vre(s(\vre,\vte)-s(\vrb,\vtb))\vue .
\end{equation}

In the next step we rewrite the system \eqref{eq:CEac}, \eqref{eq:MEac}, \eqref{eq:EBac} using new set of variables, namely we define
\begin{equation}\label{eq:rdef}
r_\ep = \vr_\ep^{(1)} + \frac{A}{\zeta}\vre\frac{s(\vre,\vte) - s(\vrb,\vtb)}{\ep} - \frac{1}{\zeta}\vrb F,
\end{equation}
where we set
\begin{equation}\label{eq:Aom}
A = \frac{\de_\vt p(\vrb,\vtb)}{\vrb \de_\vt s(\vrb,\vtb)}, \qquad \zeta = \de_\vr p(\vrb,\vtb) + A\frac{\de_\vt p(\vrb,\vtb)}{\vrb}.
\end{equation}
Theorem \ref{t:transport} yields
\begin{equation}\label{eq:help}
\intint{F\de_t\varphi} = -\intint{\Div(F\varphi\vV)} = -\intint{\vV\cdot\nabla_x F \varphi + F\vV\cdot\nabla_x\varphi}.
\end{equation}
Hence, we sum \eqref{eq:CEac} and an appropriate multiple of \eqref{eq:EBac} and by \eqref{eq:help} we end up with
\begin{equation}\label{eq:CEac2}
\intint{\ep r_\ep \partial_t \varphi + \vz_\ep\cdot\nabla_x\varphi} = \ep\intint{\left(\vh^4_\ep\cdot\nabla_x\varphi + h^5_\ep \varphi\right)} - 
\frac{A}{\zeta}\langle\sigma_\ep;\varphi\rangle
\end{equation}
for all $\varphi \in C^\infty_c(\overline{Q_T})$, $\varphi(0,\cdot)=0$, $\varphi(T,\cdot) = 0$, where
\begin{align}
\vh^4_\ep &= -\vre^{(1)}\vV + \frac{1}{\ep}\frac{A}{\zeta}\vh^3_\ep + \frac{\vrb}{\zeta}F\vV ,
\label{eq:h4}\\
h^5_\ep &= \frac{\vrb}{\zeta}\vV\cdot\nabla_x F \label{eq:h5}.
\end{align}
The acoustic version of the momentum equation \eqref{eq:MEac} is rewritten as
\begin{align}\label{eq:MEac2}
&\intint{\ep\vz_\ep\cdot\partial_t\vph + \zeta r_\ep\Div\vph} \nonumber \\
& =  \ep \intint{\frac{\vrb-\vre}{\ep}\nabla_x F \cdot \vph} + \ep\intint{\left(\HH^1_\ep:\nabla_x\vph + \vh^2_\ep\cdot\vph + h^6_\ep \Div \vph\right)}
\end{align}
for any  $\vph \in C^\infty_c(\overline{Q_T})$, $\vph(0,\cdot)=0$, $\vph(T,\cdot)=0$, $\vph\cdot\vn|_{\Gamma_t} = 0$ for $t\in [0,T]$, where we use the definitions of $A$ and $\zeta$ stated in \eqref{eq:Aom} together with the notation
\begin{align}
\frac{[p(\vre,\vte)]_{ess} - p(\vrb,\vtb)}{\ep} &= \partial_\vr p(\vrb,\vtb) \vr_\ep^{(1)} + \partial_\vt p(\vrb,\vtb) \vt_\ep^{(1)} + h^7_\ep ,\nonumber \\
\left[\vre\frac{s(\vre,\vte) - s(\vrb,\vtb)}{\ep}\right]_{ess} &= \vrb\partial_\vr s(\vrb,\vtb) \vr_\ep^{(1)} + \vrb\partial_\vt s(\vrb,\vtb)\vt_\ep^{(1)} + h^8_\ep , \nonumber \\
h^6_\ep &= \frac{1}{\ep}\left(A\left[\vre\frac{s(\vre,\vte) - s(\vrb,\vtb)}{\ep}\right]_{res} + Ah^8_{\ep} - h^7_\ep\right) , \label{eq:h6}
\end{align}
and since both $p$ and $s$ are twice continuously differentiable we have by \cite[Proposition 5.2]{FeNo6}
\begin{align}
\text{ess} \sup_{t\in (0,T)} \int_{\Omega_t} \abs{h^7_\ep(t,x)} \dx \leq C\ep , \label{eq:h7} \\
\text{ess} \sup_{t\in (0,T)} \int_{\Omega_t} \abs{h^8_\ep(t,x)} \dx \leq C\ep. \label{eq:h8}
\end{align}

\subsection{Time lifting}


The right hand side of \eqref{eq:CEac2} contains a measure $\sigma_\ep$ and thus the associated solution is not necessarily continuous. To prevent this, we adopt the method described in \cite[Section 5.4.7]{FeNo6} - called time lifting - and we introduce a "primitive" measure $\Sigma_\varepsilon$ on $Q_t$  defined as
\begin{equation*}
\langle \Sigma_\varepsilon; \varphi\rangle = \langle \sigma_\varepsilon; I[\varphi]\rangle,
\end{equation*}
with 
\begin{equation*}
I[\varphi](t,x) = \int_0^t \varphi(\tau, \widetilde{\vc{X}}(\tau,x) ){\rm d}\tau
\end{equation*}
where $\widetilde{\vc{X}}(\tau,x)$ is a solution to
\begin{equation*}
\frac{{\rm d}}{{\rm d}t} \widetilde{\vc{X}}(\tau, x) = \vc{V} \Big( \tau, \widetilde{\vc{X}}(\tau, x) \Big)
\end{equation*}
with the condition $\widetilde{\vc{X}}(t,x) = x$. It follows that
\begin{equation*}
\langle \Sigma_\varepsilon; \partial_t \varphi + \Grad \varphi \cdot {\bf V}\rangle = \langle \sigma_\ep; \varphi\rangle
\end{equation*}
and $\Sigma_\varepsilon$ can be also identified as a mapping $[0,T)\mapsto \mathcal M^+(\Omega_t)$, where $\Sigma_\varepsilon(t)$ is defined by the duality with a function $\widetilde{\varphi} \in C(\overline{\Omega_t})$ as follows
\begin{equation*}
\langle \Sigma_\varepsilon(t); \widetilde{\varphi}\rangle = \lim_{\delta\to 0_+}\langle \sigma_\varepsilon; \psi_\delta \widetilde{\varphi}_{ext}\rangle,\ \mbox{for almost all }t\in [0,T)
\end{equation*}
with 
\begin{equation*}
\psi_\delta(\tau) = \left\{\begin{array}{l} 0\ \mbox{for}\ \tau\leq t\\ \frac1\delta(\tau-t)\ \mbox{for}\ \tau\in(t, t + \delta)\\ 1 \ \mbox{for}\ \tau\geq t + \delta\end{array}
\right.
\end{equation*}
and $\widetilde{\varphi}_{ext}(\tau,x)$ is the extension of the function $\widetilde{\varphi}(x)$ given as
\begin{equation*}
\widetilde{\varphi}_{ext}(\tau,\widetilde{\vc{X}}(\tau,x)) = \widetilde{\varphi}(x).
\end{equation*}
It holds that
\begin{equation}\label{est.Sigma}
\mbox{ess sup}_{t\in (0,T)} \|\Sigma_\varepsilon(t)\|_{\mathcal M^+(\overline{\Omega_\tau})}\leq \|\sigma_\varepsilon\|_{\mathcal M^+(\overline{Q_T})}\leq \varepsilon^2 c.
\end{equation}
We use a notation
\begin{equation*}
\langle \Sigma_\varepsilon(t); \varphi\rangle : = \int_{\Omega_t} \Sigma_\varepsilon(t) \varphi\dx .
\end{equation*}
We define a new variable
\begin{equation}\label{eq:Zdef2}
Z_\varepsilon = r_\varepsilon +  \frac A {\ep\zeta} \Sigma_\varepsilon.
\end{equation}
and we rewrite \eqref{eq:CEac2} and \eqref{eq:MEac2} as
\begin{equation}\label{eq:CEac21}
\intint{\ep Z_\ep \partial_t \varphi + \vz_\ep\cdot\nabla_x\varphi} = \ep\intint{\left(\vh^4_\ep\cdot\nabla_x\varphi + h^5_\ep \varphi  - \frac A{\ep \zeta} \Sigma_\ep {\bf V} \cdot \Grad \varphi \right)}
\end{equation}
and
\begin{align}\label{eq:MEac21}
\intint{\ep\vz_\ep\cdot\partial_t\vph &+ \zeta Z_\ep\Div\vph} =  \ep \intint{\frac{\vrb-\vre}{\ep}\nabla_x F \cdot \vph}\nonumber\\
 &+ \ep\intint{\left(\HH^1_\ep:\nabla_x\vph + \vh^2_\ep\cdot\vph + \left(h^6_\ep +\frac A{\ep^2} \Sigma_\ep\right)\Div \vph\right)}.
\end{align}

\subsection{Reduction to a finite number of modes in moving domain}

We want to reduce our problem and study it only after projecting it into finite number of modes. To this end we follow the strategy developed in \cite[Section 4]{FKNNS14} and we introduce the eigenvalue problem 
\begin{equation*}
\nabla_x\omega =
-\lambda(t)\va, \quad \Div \va = -\lambda(t)\om \quad \text{ in }
\Omega_t, \quad \va\cdot\vn = 0 \quad \text{ on } \Gamma_t,
\end{equation*}
which admits solutions in the form
\begin{equation*}
\va_j(t,x) =
\frac{\rm i}{\sqrt{\Lambda_j(t)}} \nabla_x \om_j(t,x), \quad
\lambda_j(t) = {\rm i}\sqrt{\Lambda_j(t)}, \quad j = 1,2,\dots .
\end{equation*}
Here $\Lambda_j(t)$, $\omega_j(t,\cdot)$ are eigenvalues and eigenfunctions of the Neumann Laplace problem
\begin{equation*}
-\Delta_x\omega = \Lambda(t)\omega \text{ in } \Omega_t, \quad \nabla_x\omega\cdot\vn = 0 \quad \text{ on } \Gamma_t,
\end{equation*}
which admits real eigenvalues
\begin{equation*}
0 = \Lambda_0(t) < \Lambda_1(t) \leq \Lambda_2(t) \leq ...
\end{equation*}
We have that 
$\{\va_j(t,\cdot)\}_{j=1}^\infty$ forms an orthonormal basis in  $\Hpt(L^2(\Omega_t))=\overline{\left\{\text{span}\left\{{\rm i} \va_j(t,\cdot)\right\}_{j=1}^\infty\right\}}^{L^2(\Omega_t)}$ and the eigenspace of $\lambda_0(t) = 0$ is $\Ht(L^2(\Omega_t))$.

Since $\vV$ is smooth enough we use \cite[Theorem 4.3]{BLL} to conclude 
\begin{equation*}
\abs{\frac{1}{\Lambda_j(t_1)} - \frac{1}{\Lambda_j(t_2)}} \leq C \abs{t_1-t_2},
\end{equation*}
for $t_1,t_2 \in [0,T]$, however no such property holds for the eigenfunctions $\omega_j$ and $\va_j$. Therefore we work with the projections on the eigenspaces spanned by finite number of eigenfunctions. More precisely, fixing an integer $M > 0$ we define projections
\begin{align}
P_M[\varphi](t,\cdot) &:= \sum_{j=1}^M \omega_j(t,\cdot)\int_{\Omega_t}\varphi(t,y)\omega_j(t,y)\dy, \quad \varphi\in L^2(Q_T) , \nonumber \\
\vQ_M[\vph](t,\cdot) &:= \sum_{j=1}^M \va_j(t,\cdot)\int_{\Omega_t}\vph(t,y)\cdot\va_j(t,y)\dy, \quad \vph\in L^2(Q_T). \nonumber
\end{align}

As explained in \cite[Section 4]{FKNNS14} the Lipschitz continuity of projections $P_M$, $\vQ_M$ cannot be expected in general on the whole time interval $[0,T]$ even if $\vV$ is smooth, because such property holds only under the assumption 
\begin{equation}\label{eq:MM1}
\Lambda_{M+1} \neq \Lambda_M.
\end{equation}
It may happen that there is no $M > 0$ such that \eqref{eq:MM1} holds for all $t \in [0,T]$. This is solved by introducing a finite cover of $[0,T]$ formed by intervals $\{I_l\}_{l=1}^n$, where for every $l \in \{1,...,n\}$ there exists $M_l > M$ for some fixed $M$ such that 
\begin{equation}\label{eq:MM2}
\Lambda_{M_l+1} \neq \Lambda_{M_l} \qquad \text{for all } t \in I_l.
\end{equation}

We take $\psi \in C^\infty_c(I_l)$ and $\varphi \in C^\infty_c((0,T)\times\Rtri)$ and use $\psi(t)P_{M_l}[\varphi](t,x)$ as a test function in \eqref{eq:CEac21} to obtain
\begin{align}\label{eq:CEac3}
&\int_{I_l} \psi \int_{\Omega_t} \left(\ep \partial_tP_{M_l}[Z_\ep] + \Div \vQ_{M_l}[\vz_\ep]\right)\varphi \dxdt \\ \nonumber 
= - &\ep\int_{I_l} \psi \int_{\Omega_t}\left( \vh^4_\ep\cdot\nabla_xP_{M_l}[\varphi] + h^5_\ep P_{M_l}[\varphi]  - 
\frac{A}{\ep \zeta}\Sigma_\ep \vV\cdot \Grad P_{M_l}[\varphi]\right)\dxdt \\ \nonumber
+ &\ep\int_{I_l} \psi \int_{\Omega_t}\left( Z_\ep \partial_t P_{M_l}[\varphi] - \vV \cdot \nabla_x \left(P_{M_l}[Z_\ep]\varphi\right) - P_{M_l}[Z_\ep]\partial_t \varphi\right) \dxdt.
\end{align}

We take $\vph \in C^\infty_c((0,T)\times\Rtri)$, $\vph\cdot\vn = 0$ on $\Gamma_t$, and use $\psi(t)\vQ_{M_l}[\vph](t,x)$ as a test function in \eqref{eq:MEac21} to obtain
\begin{align}\label{eq:MEac3}
&\int_{I_l} \psi \int_{\Omega_t} \left(\ep \partial_t \vQ_{M_l}[\vz_\ep] + \zeta \nabla_x P_{M_l}[Z_\ep]\right)\cdot\vph \dxdt \\ \nonumber 
= &- \ep \int_{I_l} \psi \int_{\Omega_t}\left( \frac{\vrb-\vre}{\ep}\nabla_x F \cdot \vQ_{M_l}[\vph]\right.\\\nonumber&\left. +\, \HH^1_\ep:\nabla_x\vQ_{M_l}[\vph] + \vh^2_\ep\cdot\vQ_{M_l}[\vph] + \left(h^6_\ep  +\frac A{\ep^2} \Sigma_\ep\right)\Div \vQ_{M_l}[\vph]\right) \dxdt  \\ \nonumber
+ &\ep \int_{I_l} \psi \int_{\Omega_t}\left( \vz_\ep\cdot \partial_t \vQ_{M_l}[\vph] - \vV \cdot \nabla_x \left(\vQ_{M_l}[\vz_\ep]\cdot\vph\right) - \vQ_{M_l}[\vz_\ep]\cdot\partial_t \vph\right) \dxdt.
\end{align}

We observe that introducing $d_{\ep,l} := P_{M_l}[Z_\ep]$ and $\nabla_x \Psi_{\ep,l} := \vQ_{M_l}[\H^\perp[\vz_\ep]]$, the system of equations \eqref{eq:CEac3}-\eqref{eq:MEac3} can be formally written as 
\begin{align}\label{eq:ACel1}
\ep \partial_t \del + \Delta_x \psel = \ep \felj \\
\ep \de_t \nabla_x \psel + \zeta \nabla_x \del = \ep \feld\label{eq:ACel2}
\end{align}
for some $\felj, \feld$, which is to be satisfied in $\{(t,x): t \in I_l, x \in \Omega_t\}$. However, we will  rather work with the weak formulation which reads as
\begin{align}\label{eq:CEac4}
&\int_{I_l} \psi \int_{\Omega_t} \left(\ep \partial_t \del + \Delta_x \psel\right)\varphi \dxdt \\ \nonumber 
= - &\ep\int_{I_l} \psi \int_{\Omega_t}\left( \vh^4_\ep\cdot\nabla_xP_{M_l}[\varphi] + h^5_\ep P_{M_l}[\varphi] -
\frac{A}{\ep \zeta} \Sigma_\ep\vV\cdot \Grad P_{M_l}[\varphi]\right) \dxdt \\ \nonumber
+ &\ep \int_{I_l} \psi \int_{\Omega_t}\left( Z_\ep \partial_t P_{M_l}[\varphi] - \vV \cdot \nabla_x \left(\del\varphi\right) - \del\partial_t \varphi\right) \dxdt =: \ep\felj[\psi,\varphi]
\end{align}
for all $\psi \in C^\infty_c(I_l)$ and $\varphi \in C^\infty_c((0,T)\times\Rtri)$ and
\begin{align}\label{eq:MEac4}
&\int_{I_l} \psi \int_{\Omega_t} \left(\ep \partial_t \nabla_x\psel + \zeta \nabla_x \del\right)\cdot\vph \dxdt \\ \nonumber 
= - &\ep \int_{I_l} \psi \int_{\Omega_t}\left( \frac{\vrb-\vre}{\ep}\nabla_x F \cdot \vQ_{M_l}[\vph] + \HH^1_\ep:\nabla_x\vQ_{M_l}[\vph] \right.\\&\left.\nonumber+ \vh^2_\ep\cdot\vQ_{M_l}[\vph] + \left(h^6_\ep  +\frac A{\ep^2} \Sigma_\ep\right)\Div \vQ_{M_l}[\vph]\right) \dxdt  \\ \nonumber
+ &\ep \int_{I_l} \psi \int_{\Omega_t}\left( \vz_\ep\cdot \partial_t \vQ_{M_l}[\vph] - \vV \cdot \nabla_x \left(\nabla_x \psel\cdot\vph\right) - \nabla_x\psel\cdot\partial_t \vph\right) \dxdt =: \ep\feld[\psi,\vph].
\end{align}
for all $\psi \in C^\infty_c(I_l)$ and $\vph \in C^\infty_c((0,T)\times\Rtri)$, $\vph\cdot\vn = 0$.


\subsection{Conclusion of the proof}

Let us remind that we want to prove \eqref{eq:goal}. For this order we  introduce the following partition of unity on a time interval $[0,T]$ 
\begin{equation*}
\sum_{l = 1}^n \psi_l(t) = 1  \quad \text{ for all } t \in [0,T], \quad\mbox{ where }
\psi_l \in C^\infty_c(I_l)  \quad l = 1,...,n,
\end{equation*}
where $I_l$ are intervals introduced in the previous section such that \eqref{eq:MM2} holds and we write 
\begin{equation*}
\begin{split}
&\intint{\Hpt[\vre(\vue-\vV)]\otimes\Hpt[(\vue-\vV)]:\nabla_x \vph} \\ 
& \qquad = \sum_{l=1}^n \int_{I_l}\psi_l \int_{\Omega_t} \Hpt[\vre(\vue-\vV)]\otimes\Hpt[(\vue-\vV)] : \nabla_x \vph \dxdt
\end{split}
\end{equation*}
for test functions $\vph \in C^1_c(\overline{Q_T})$, $\Div \vph = 0$, $\vph(0,\cdot) = \vph(T,\cdot) = 0$, $\vph\cdot\vn|_{\Gamma_t} = 0$.

We split both terms of the product into the finite mode part and the remainder part
\begin{equation*}
\begin{split}
\Hpt[\vze]\otimes\Hpt[(\vue-\vV)] &= (\vQ_{M_l}[\Hpt[\vze]] + (\Hpt[\vze] - \vQ_{M_l}[\Hpt[\vze]])) \\  &\otimes (\vQ_{M_l}[\Hpt[\vue-\vV]] + (\Hpt[\vue-\vV] - \vQ_{M_l}[\Hpt[\vue-\vV]])).
\end{split}
\end{equation*}
Moreover, we also have
\begin{equation*}
\begin{split}
\Hpt[\vze] - \vQ_{M_l}[\Hpt[\vze]] &= \Hpt[(\vre-\vrb)(\vue-\vV)] - \vQ_{M_l}[\Hpt[(\vre-\vrb)(\vue-\vV)]]	 \\ \nonumber &+ \vrb (\Hpt[\vue-\vV] - \vQ_{M_l}[\Hpt[\vue-\vV]])
\end{split}
\end{equation*}
and we recall that for $\ep \to 0$
\begin{equation*}
(\vre-\vrb)\vue \sil 0 \quad \text{ strongly in } L^2(0,T,L^{30/23}(\Omega_t)),
\end{equation*}
so the same holds also for $\Hpt[(\vre-\vrb)(\vue-\vV)] - \vQ_{M_l}[\Hpt[(\vre-\vrb)(\vue-\vV)]]$.

We want to show that the remainder terms are small if we choose $M_l$ large enough. We start with a useful expression for the $L^2$-norm of $\Div \vue$
\begin{equation*}
\begin{split}
\|\Div \vue \|_{L^2(\Omega_t)}^2 = \|\Div (\vue - \vV) \|_{L^2(\Omega_t)}^2 = \left\| \sum_{j=1}^\infty \Div \va_j \int_{\Omega_t} (\vue-\vV)\cdot\va_j \dx \right\|_{L^2(\Omega_t)}^2 \\
 = \left\| \sum_{j=1}^\infty \sqrt{\Lambda_j}\omega_j \int_{\Omega_t} (\vue-\vV)\cdot\va_j \dx \right\|_{L^2(\Omega_t)}^2 =  \sum_{j=1}^\infty \Lambda_j \left(\int_{\Omega_t} (\vue-\vV)\cdot\va_j \dx \right)^2
\end{split}
\end{equation*}
and we use it as follows
\begin{equation*}
\begin{split}
\left\| \Hpt[\vue-\vV] - \vQ_{M_l}[\Hpt[\vue-\vV]] \right\|_{L^2(\Omega_t)}^2 = \sum_{j > M_l} \left(\int_{\Omega_t} (\vue-\vV)\cdot\va_j \dx \right)^2 \\ 
\leq \frac{1}{\inf_{j>M_l} \Lambda_j(t)}\|\Div \vue \|_{L^2(\Omega_t)}^2 \leq \frac{1}{\inf_{j>M} \Lambda_j(t)}\|\Div \vue \|_{L^2(\Omega_t)}^2.
\end{split}
\end{equation*}
We observe that the quantity
\begin{equation*}
\frac{1}{\inf_{t\in[0,T], j>M} \Lambda_j(t)}
\end{equation*}
can be made as small as we want by the choice of $M$.

We are left with the goal of estimating the product of finite modes terms, namely
\begin{equation*}
\int_{I_l} \psi_l \int_{\Omega_t} \vQ_{M_l}[\Hpt[\vze]] \otimes \vQ_{M_l}[\Hpt[(\vue-\vV)]] : \nabla_x \vph \dxdt \sil 0 \quad \mbox{ as }\ep \to 0
\end{equation*}
for all test functions $\vph \in C^1_c(\overline{Q_T})$, $\Div \vph = 0$, $\vph(0,\cdot) = \vph(T,\cdot) = 0$, $\vph\cdot\vn|_{\Gamma_t} = 0$ and for $l = 1,...,n$, which can be rewritten equivalently to 
\begin{equation*}
\int_{I_l} \psi_l \int_{\Omega_t} \vQ_{M_l}[\Hpt[\vze]] \otimes \vQ_{M_l}[\Hpt[\vze]] : \nabla_x \vph \dxdt \sil 0 \quad 
\mbox{ as }\ep \to 0.
\end{equation*}

We recall that we denoted $\nabla_x \psel = \vQ_{M_l}[\Hpt[\vze]]$ and we have equations \eqref{eq:ACel1}-\eqref{eq:ACel2}, or more precisely their weak formulations \eqref{eq:CEac4}-\eqref{eq:MEac4} at our disposal. Integrating by parts we have
\begin{equation*}
\begin{split}
&\int_{I_l} \psi_l \int_{\Omega_t} (\nabla_x \psel \otimes \nabla_x\psel) : \nabla_x \vph \dxdt  \\
& \quad = - \int_{I_l} \psi_l \int_{\Omega_t} \Delta_x \psel \nabla_x\psel \cdot \vph \dxdt - \frac 12 \int_{I_l} \psi_l \int_{\Omega_t} \nabla_x | \nabla_x\psel |^2 \cdot \vph \dxdt \\ 
& \quad = - \int_{I_l} \psi_l \int_{\Omega_t} \Delta_x \psel \nabla_x\psel \cdot \vph \dxdt
\end{split}
\end{equation*}
where we used that the second term on the middle line is zero due to the fact that $\Div \vph = 0$. We use the equation \eqref{eq:CEac4} with $\varphi = \nabla_x\psel\cdot \vph$ and \eqref{eq:MEac4} with $\del \vph$ as a test function together with the transport theorem to obtain
\begin{align} \nonumber
& - \int_{I_l}\psi_l \int_{\Omega_t} \Delta_x \psel \nabla_x\psel \cdot \vph \dxdt = \ep \int_{I_l}\psi_l \int_{\Omega_t} \de_t \del \nabla_x\psel \cdot \vph \dxdt - \ep \felj[\psi_l,\nabla_x\psel \cdot \vph] \\ \nonumber
& \qquad = \ep \int_{I_l}\psi_l \int_{\Omega_t} \left(\de_t( \del \nabla_x\psel) - \del\de_t\nabla_x\psel\right)\cdot \vph \dxdt - \ep \felj[\psi_l,\nabla_x\psel \cdot \vph] \\ \nonumber
& \qquad = \ep \int_{I_l} \int_{\Omega_t} \left(\de_t( \psi_l\del \nabla_x\psel\cdot\vph ) - \del\nabla_x\psel\cdot \de_t(\psi_l \vph)\right) \dxdt \\ \nonumber
& \qquad + \int_{I_l}\psi_l \int_{\Omega_t} \zeta \nabla_x \frac{\del^2}{2} \cdot \vph \dxdt - \ep \feld[\psi_l,\del\vph] - \ep \felj[\psi_l,\nabla_x\psel \cdot \vph] \\ \nonumber
& \qquad = -\ep \int_{I_l} \psi_l \int_{\Omega_t} \vV \cdot\nabla_x (\del \nabla_x\psel\cdot\vph )\dxdt - \ep \int_{I_l} \int_{\Omega_t} \del\nabla_x\psel\cdot \de_t(\psi_l \vph) \dxdt \\ \label{eq:666}
& \qquad - \ep \feld[\psi_l,\del\vph] - \ep \felj[\psi_l,\nabla_x\psel \cdot \vph],
\end{align}
where the last equality is true because $\Div \vph = 0$. All the terms on the right hand side are multiplied by $\ep$, so to conclude our proof it is enough to show that the integrals contained in the right hand side are bounded independently of $\ep$ and $l$. 
The first two integrals on the right hand side of \eqref{eq:666} contain only smooth functions and therefore are obviously bounded, so we focus only on the terms $\felj$ and $\feld$. By \eqref{eq:CEac4} we have 
\begin{align}\label{eq:felj}
\felj[\psi,\varphi] =& - \int_{I_l} \psi \int_{\Omega_t}\left( \vh^4_\ep\cdot\nabla_xP_{M_l}[\varphi] + h^5_\ep P_{M_l}[\varphi] -
\frac{A}{\ep \zeta} \Sigma_\ep \vV\cdot \Grad P_{M_l}[\varphi]\right) \dxdt \\ \nonumber & + \int_{I_l} \psi \int_{\Omega_t} Z_\ep \de_t P_{M_l}[\varphi] \dxdt +  I_1[\psi,\varphi],
\end{align}
and one checks that $I_1[\psi_l,\nabla_x\psel \cdot \vph]$ contains again only smooth functions and therefore is bounded. Similarly 
\begin{align}\label{eq:feld}
\feld[\psi,\vph]  = &- \int_{I_l} \psi \int_{\Omega_t}\left( \frac{\vrb-\vre}{\ep}\nabla_x F \cdot \vQ_{M_l}[\vph] + \HH^1_\ep:\nabla_x\vQ_{M_l}[\vph] + \vh^2_\ep\cdot\vQ_{M_l}[\vph]\right) \dxdt\\ \nonumber 
& - \int_{I_l} \psi \int_{\Omega_t} \left(\left(h^6_\ep +\frac A{\ep^2} \Sigma_\ep\right)\Div \vQ_{M_l}[\vph] - \vze\cdot \de_t\vQ_{M_l}[\vph] \right) \dxdt + I_2[\psi,\varphi],
\end{align}
with $I_2[\psi_l,\del\vph]$ containing only smooth functions and therefore bounded. Recalling \eqref{eq:newvar}, \eqref{eq:H1}, \eqref{eq:h2}, \eqref{eq:h3}, \eqref{eq:rdef}, \eqref{eq:h4}, \eqref{eq:h5}, \eqref{eq:h6} and \eqref{eq:Zdef2} we have
\begin{equation*}
\begin{split}
\vze &= \vre(\vue-\vV) \\
Z_\ep &= \vre^{(1)} + \frac{A}{\zeta}\vre\frac{s(\vre,\vte) - s(\vrb,\vtb)}{\ep} - \frac{1}{\zeta} \vrb F + \frac{A}{\ep\zeta} \Sigma_\ep \\
\HH^1_\ep &= -\vre\vue\otimes\vue + \tn{S}_\ep - \frac{[p(\vre,\vte)]_{res}}{\ep^2}\tn{I} + \vre\vue\otimes\vV \\
\vh^2_\ep &= \vre \de_t \vV + \vre\vue\cdot\nabla_x\vV \\
\vh^4_\ep &= -\vre^{(1)}\vV + \frac{1}{\ep}\frac{A}{\zeta}\left(\frac{\kappa(\vte)}{\vte}\nabla_x\vte - \vre(s(\vre,\vte)-s(\vrb,\vtb))\vue\right) + \frac{\vrb}{\zeta}F\vV  \\ 
h^5_\ep &= \frac{\vrb}{\zeta}\vV\cdot\nabla_x F \\
h^6_\ep &= \frac{1}{\ep}\left(A\left[\vre\frac{s(\vre,\vte) - s(\vrb,\vtb)}{\ep}\right]_{res} + Ah^8_{\ep} - h^7_\ep\right) 
\end{split}
\end{equation*}
with $h^7_\ep, h^8_\ep$ satisfying \eqref{eq:h7}-\eqref{eq:h8}. By Lemma \ref{l:estimates} and \eqref{est.Sigma} we conclude that 
\begin{equation*}
\left|\felj[\psi_l,\nabla_x\psel \cdot \vph] \right| + \left|\feld[\psi_l,\del\vph] \right| \leq c.
\end{equation*}
The proof of Theorem \ref{t:main} is finished.


\end{document}